\newtheorem{thm}{Theorem}[section]
\newtheorem{lem}[thm]{Lemma}
\newtheorem{prop}[thm]{Proposition}
\newtheorem{defn}[thm]{Definition}
\newtheorem{rmk}[thm]{Remark}
\newcommand\mycom[2]{\genfrac{}{}{0pt}{}{#1}{#2}}
\title{Existence and Uniqueness of Solution of a Continuous Flow Bioreactor Model with Two Species.}
\author{ M. CRESPO$^1$, B. IVORRA$^2$ and A.M RAMOS$^3$ \\[0.3cm]
Departamento de Matem\'atica Aplicada, Universidad Complutense de Madrid \\[0.1cm]
$\&$ Instituto de Matem\'atica Interisciplinar
\\[0.3cm]
Plaza de Ciencias, 3, 28040 Madrid, Spain. \\[0.3cm]
{\small  $^1$ E-mail: mcresp01@ucm.es. Tel.: +34-913944462} \\[0.1cm]
{\small $^2$ E-mail: ivorra@mat.ucm.es. Tel.: +34-913944415} \\[0.1cm]
{\small $^3$ E-mail: angel@mat.ucm.es. Tel.: +34- 913944642 }
}
\date{}
\begin{document}
\maketitle
\begin{abstract}
In this work, we perform the mathematical analysis of a coupled system of two reaction-diffusion-advection equations and Danckwerts boundary conditions, which models the interaction between a microbial population (e.g., bacterias) and a diluted substrate (e.g., nitrate) in a continuous flow bioreactor. This type of bioreactor can be used, for instance, for water treatment. First, we prove the existence and uniqueness of solution, under the hypothesis of linear reaction by using classical results for linear parabolic boundary value problems. Next, we prove the existence and uniqueness of solution for some nonlinear reactions by applying \textit{Schauder Fixed Point Theorem} and the theorem obtained for the linear case. Results about the nonnegativeness and boundedness of the solution are also proved here. 
\end{abstract}

\section{Introduction}\label{sec:introduction}
Water treatment is an important environmental issue whose main objective is to provide clean water to human populations (see, e.g., \cite{american2003water}). One of the principal causes of contamination of water resources is due to organic or mineral substrates (e.g., nitrates or phosphorus) which are produced by the agriculture and chemical sectors. A way to perform the decontamination of these substrates is to use a bioreactor. In our framework, a bioreactor is a vessel in which a microorganism (e.g., bacteria or yeast), called biomass, is used to degrade a considered diluted substrate. Developing mathematical models that allow to simulate the interaction between  biomass and substrate inside a bioreactor is of great interest in order to design efficient water treatment devices (see, e.g., \cite{bellorivas,AIC}).

There exists many mathematical models describing the competition between biomass and substrate in bioreactors. Most theoretical studies consider a well-mixed environment, such as the chemostat (see, e.g.,~\cite{SmithWaltman}). Focusing on bacterias, some of the first explorations of bacterial growth in spatially distributed environments were carried out by Lauffenburger, Aris and Keller~\cite{Keller} and Lauffenburger and Calcagno~\cite{Calcagno}. Particularly, Kung and Baltzis~\cite{Kung} considered a tubular bioreactor (assumed to be a thin tube), through which a liquid charged with a substrate at constant concentration enters the bioreactor with a constant flow rate, and the outflow leaves the bioreactor with the same flow rate. These considerations lead to a coupled system of two reaction-diffusion-advection equations with Danckwerts boundary conditions, typically used for continuous flows bioreactors (see, e.g., \cite{Bailey,Kung,wen1975models}). 

This system of parabolic equations has received considerable attention in the literature, both from theoretical and applied points of view. One can find the one-dimensional version of the model with Danckwerts boundary conditions in \cite{Ballyk}, \cite{Drame2004} and\cite{Qiu2004465}, where the asymptotic behavior of the solution is studied under the assumption of constant fluid flow and entering substrate. There exist many works on	the existence and uniqueness of solution of linear parabolic equations~\cite{evans2010partial}~\cite{friedman}~\cite{lady}~\cite{ildefonso}, particularly, for general bounded domains (see, e.g., ~\cite{lieberman1992,lions2,lionscontrol,lions}). For the existence and uniqueness of solution of nonlinear parabolic systems in $\mathcal{C}^{1+\alpha}$ domains with mixed boundary conditions one can see the work developed by Pao~\cite{Pao1992,Pao2007472}, where the method of lower and upper solutions is used. The existence and uniqueness for a predator-prey type model with nonlinear reaction term is proved in~\cite{balachandran} for Neumann boundary conditions. 

In this work, we carry out a mathematical analysis of a coupled system of two reaction-diffusion-advection equations completed with Danckwerts boundary conditions, which models the interaction between a substrate and a biomass, whose concentrations are denoted by $S$ and $B$, respectively. We prove the existence and uniqueness of (weak) solutions, together with results about the nonnegativeness and boundedness of the solution. The reaction term is assumed nonlinear in $S$. The domain into consideration is a three-dimensional cylindrical bioreactor with Lipschitz boundary. The bioreactor is fed with a substrate concentration $S_{\rm e}$ at flow rate $Q$, and the treated outflow leaves the bioreactor with the same flow rate $Q$. In contrast to the models presented in~\cite{Ballyk}, \cite{Drame2004} and \cite{Qiu2004465}, we allow variable $Q$ to vary with time and space, we also allow $S_{\rm e}$ to vary with time and we consider a three-dimensional domain with Lipschitz boundary. 

This papers is organized as follows: Section \ref{modeling} introduces the mathematical model which describes the behavior of the continuous flow bioreactor and considers nonlinear reaction between the biomass and the substrate. We also state the definition of weak solution. In Section \ref{existenceuniqueness}, we first prove the existence and uniqueness of solution of a simplified linear system through some classical results for linear parabolic boundary value problems. Then, we prove the existence and uniqueness of solution of the nonlinear system applying the Schauder Fixed Point Theorem. 

\section{Mathematical modeling and weak solutions}\label{modeling}
We consider a cylindrical bioreactor as the one showed in Figure \ref{fig:bioreactor}. We denote by $\Omega \subset \mathds{R}^{3}$ its spatial domain, by $\delta \Omega =\Gamma$ its boundary and by $\bar{\Omega}$ their union, i.e, $\bar{\Omega}=\Omega \cup \delta \Omega$. We assume that $\Gamma_{\rm in}$ is the inlet upper boundary,  $\Gamma_{\rm out}$ is the outlet lower boundary and $\Gamma_{\rm wall}=\Gamma\setminus (\Gamma_{\rm in}\cup\Gamma_{\rm out})$.
\begin{figure}[ht!]
\centering
\includegraphics[width=39mm]{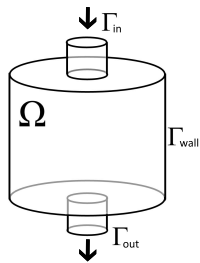}
\caption{Typical domain representation of the bioreactor geometry.}
\label{fig:bioreactor}
\end{figure}
At the beginning of the process, there is a certain amount of biomass and substrate inside $\Omega$. Furthermore, during the studied time interval,  diluted substrate enters the device through the inlet $\Gamma_{\rm in}$ and the fluid exits the bioreactor through the outlet $\Gamma_{\rm out}$.
We consider the following system describing the behavior of this particular bioreactor
\begin{equation}\label{advdifreac}
\left \{ \begin{array}{l r} 
S_{t} = {\rm div}( D_{\rm S} \nabla S - \mathbf{Q} S) - \mu(S)B  & \text{in  }\Omega \times (0,T),
\\
\\
B_{t} = {\rm div}( D_{\rm B} \nabla B - \mathbf{Q}B) + \mu(S)B  & \text{in  }\Omega \times (0,T),
\\
\\
S(x,0)=S_{\rm init}(x) &   \text{in  }\Omega,\\
\\
B(x,0)=B_{\rm init}(x) &   \text{in  }\Omega,\\
\\
\mathbf{n} \cdot( D_{\rm S}\nabla S - \mathbf{Q}S)= S_{\rm e}Q & \text{in  }\Gamma_{\rm in} \times (0,T),
\\
\\
\mathbf{n} \cdot( D_{\rm B}\nabla B - \mathbf{Q}B)= 0 & \text{in  }\Gamma_{\rm in} \cup  \Gamma_{\rm wall} \times (0,T),
\\
\\
\mathbf{n} \cdot( D_{\rm S} \nabla S - \mathbf{Q}S)= 0 & \text{in  }\Gamma_{\rm wall} \times (0,T),
\\
\\
\mathbf{n} \cdot( D_{\rm S} \nabla S) = 0 & \text{in  }\Gamma_{\rm out} \times (0,T),
\\
\\
\mathbf{n} \cdot( D_{\rm B} \nabla B) = 0 & \text{in  }\Gamma_{\rm out} \times (0,T),
\\
\\
\end{array} \right.
\end{equation}
where $T>0$ (s) is the length of the time interval for which we want to model the process, $S$ (mol/m$^{3}$) and $B$ (mol/m$^{3}$) are the substrate and biomass concentration inside the bioreactor, which diffuse throughout the water in the vessel with diffusion coefficients $D_{\rm S}$ (m$^{2}$/s) and $D_{\rm B}$ (m$^{2}$/s), respectively. The fluid flow is taken as $\mathbf{Q}=(0,0,-Q(x,t))$ where $Q$ (m/s) is the flow rate. $S_{\rm e}(t)$ (mol/m$^{3}$) is the concentration of substrate that enters into the bioreactor at time $t$ (s), $S_{\rm init}$ (mol/m$^3$) and $B_{\rm init}$ (mol/m$^{3}$) are the concentration of substrate and biomass inside the bioreactor at the beginning of the process, respectively, and $\mathbf{n}$ is the outward unit normal vector on the boundary of the domain $\Omega$. Notice that besides the Advection-Diffusion terms, we also have a term corresponding to the reaction of biomass and substrate, governed by the growth rate function $\mu(\cdot)$ (s$^{-1}$).

Now, we are interested in defining the concept of weak solution for System (\ref{advdifreac}). To do so, assuming $S,B \in W(0,T,H^{1}(\Omega),(H^{1}(\Omega))')$ (see the definition of this set in the appendix), $Q \in L^{\infty}(0,T,\mathcal{C}(\bar{\Omega}))$, $S_{\rm e}\in L^{2}(0,T)$ and $\mu \in L^{\infty}(\mathds{R})$, if we multiply the first equation of (\ref{advdifreac}) by $v\in H^{1}(\Omega)$, it follows that\\
\\
$<S_{t},v>_{(H^{1}(\Omega))' \times H^{1}(\Omega)} - <{\rm div}(D_{\rm S}\nabla S - \mathbf{Q}S), v>_{(H^{1}(\Omega))' \times H^{1}(\Omega)} + \int_{\Omega} \mu(S(x,t)) B(x,t) v(x) {\rm d}x= 0.$\\
\\
Then, applying the Green's Formula and taking into account the boundary conditions, we obtain\\
\begin{equation*}
\begin{array}{l}
<S_{t},v>_{(H^{1}(\Omega))' \times H^{1}(\Omega)}  + \int_{\Omega}  \mu(S(x,t)) B(x,t) v(x) {\rm d}x - \int_{\Gamma_{\rm in}} Q(x,t)S_{\rm e}(t) v(x) {\rm d}\Gamma_{\rm in}\\
\\ 
+ \int_{\Omega} (D_{\rm S}\nabla S(x,t) - \mathbf{Q}(x,t) S(x,t)) \nabla v(x) {\rm d}x  + \int_{\Gamma_{\rm out}}Q(x,t)S(x,t) v(x) {\rm d}\Gamma_{\rm out}= 0.\\
\end{array}
\end{equation*}
Similarly, multiplying the second equation of (\ref{advdifreac}) by $w\in H^{1}(\Omega)$, applying the Green's Formula and taking into account the boundary conditions, one has that\\
\\
$<B_{t},w>_{(H^{1}(\Omega))' \times H^{1}(\Omega)} + \int_{\Omega} (D_{\rm B}\nabla B(x,t) - \mathbf{Q}(x,t) B(x,t)) \nabla w(x) {\rm d}x + \int_{\Gamma_{\rm out}}Q(x,t)B(x,t) w(x) {\rm d}\Gamma_{\rm out}$ \\
\\
$\hspace*{1cm}  - \int_{\Omega} \mu(S(x,t))B(x,t)w(x) {\rm d}x= 0$.\\
\\ 
Let us denote $\boldsymbol{\psi}=\left ( \begin{array}{l} p \\ q \end{array} \right )$, $\boldsymbol{\phi}=\left ( \begin{array}{l} v \\ w \end{array} \right )$, $\mathbf{H}^{1}(\Omega)=H^{1}(\Omega) \times H^{1}(\Omega)$ and $(\mathbf{H}^{1}(\Omega))'= (H^{1}(\Omega))' \times (H^{1}(\Omega))'$ and consider the bilinear form $A(t,\cdot,\cdot): \mathbf{H}^{1} \times \mathbf{H}^{1} \rightarrow \mathds{R}$ defined by:\\
\\
$\begin{array}{l l} A(t,\boldsymbol{\psi},\boldsymbol{\phi})= & \int_{\Omega} (D_{\rm S}\nabla p(x) - \mathbf{Q}(x,t) p(x)) \nabla v(x) {\rm d}x + \int_{\Omega} (D_{\rm B}\nabla q(x) - \mathbf{Q}(x,t) q(x)) \nabla w(x) {\rm d}x \\
\\
& + \int_{\Gamma_{\rm out}}Q(x,t)\big(p(x) v(x)+q(x)v(x)\big) {\rm d}\Gamma_{\rm out}.
\end{array}$
\begin{defn}
A weak solution of problem (\ref{advdifreac}) is a function 
$\mathbf{u}=(S,B)$ such that \\
$S,B \in W(0,T,H^{1}(\Omega),(H^{1}(\Omega))')$ and satisfy
\begin{equation}
\left \{
\begin{array}{l}
<\mathbf{u}_{t}(\cdot),\boldsymbol{\phi}>_{(\mathbf{H}^{1}(\Omega))'\times \mathbf{H}^{1}(\Omega)}  + A(\cdot,\mathbf{u}(\cdot),\boldsymbol{\phi})=  \\
\\
\hspace*{5mm} \int_{\Gamma_{\rm in}} Q(x,\cdot)S_{\rm e}(\cdot)v(x) {\rm d}\Gamma_{\rm in} + \int_{\Omega} \mu(S(x,\cdot))B(x,\cdot) (w(x)-v(x)){\rm d}x \\
\\
\hspace*{65mm}\text{for all } \boldsymbol{\phi}=(v,w) \in \mathbf{H}^{1}(\Omega)\\
\end{array} \right.
\end{equation}
in the sense of $\mathcal{D}'(0,T)$ (see, e.g., ~\cite{gelfand1964}), i.e., all the terms above are considered as distributions in $t$. Notice that 
\\
\\
$\begin{array}{l l}<\mathbf{u}_{t}(\cdot),\boldsymbol{\phi}>_{(\mathbf{H}^{1}(\Omega))'\times \mathbf{H}^{1}(\Omega)}& = <S_{t}(\cdot),v>_{(H^{1}(\Omega))'\times H^{1}(\Omega)} + <B_{t}(\cdot),w>_{(H^{1}(\Omega))'\times H^{1}(\Omega)} \\
\\
& = \frac{{\rm d}}{{\rm d} t} \Big ( \int_{\Omega} S(\cdot,x)v(x){\rm d} x + \int_{\Omega} B(\cdot,x)w(x) {\rm d} x \Big ) \end{array}$\\
in the sense of $\mathcal{D}'(0,T)$.
\end{defn} 
\section{Existence, uniqueness, nonnegativity and boundedness of the solution}\label{existenceuniqueness}
We are first interested in proving the following result:
\begin{thm}[Existence of solution]\label{existencianolineal}
Let us assume that $Q\in L^{\infty}(0,T,\mathcal{C}(\bar{\Omega}))$ is nonnegative, $S_{\rm e}\in L^{2}(0,T)$, $S_{\rm init},B_{\rm init} \in L^{2}(\Omega)$, $D_{\rm S}, D_{\rm B} >0$ and $\mu \in L^{\infty}(\mathds{R})$ is continuous. Then, System (\ref{advdifreac}) has at least one weak solution $(S,B)$.
\end{thm}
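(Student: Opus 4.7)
My plan is to combine the Schauder fixed point theorem with the linear existence result established earlier in this section. The basic idea is to freeze the argument of the nonlinearity $\mu(\cdot)$, solve the resulting linear problem, and recover a solution of \eqref{advdifreac} as a fixed point of the associated solution operator.

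More precisely, I would proceed as follows. Given $\tilde S$ in a suitable closed convex subset $\mathcal{K}$ of $X:=L^{2}(0,T;L^{2}(\Omega))$, set $a(x,t):=\mu(\tilde S(x,t))$, which lies in $L^{\infty}(\Omega\times(0,T))$ with $\|a\|_{\infty}\le M:=\|\mu\|_{\infty}$. The biomass equation then becomes a \emph{linear} parabolic problem in $B$ (with coefficient $a$ in the zeroth-order term and the same Danckwerts boundary conditions as in \eqref{advdifreac}), so the linear existence theorem yields a unique weak solution $B=B[\tilde S]$. Substituting this $B$ into the substrate equation yields a linear parabolic problem for $S$ with source term $-aB\in L^{2}(\Omega\times(0,T))$ and the inhomogeneous inlet condition driven by $S_{\rm e}Q$; applying the linear theorem again gives $S=S[\tilde S]$. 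Define the map $T:\mathcal{K}\to X$ by $T(\tilde S):=S[\tilde S]$. Any fixed point $\tilde S$ of $T$ then produces, together with $B[\tilde S]$, a weak solution of the nonlinear system.

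The analytic work splits into three ingredients. First, the energy estimates from the linear theorem combined with Gronwall's inequality (using that $\mathbf{Q}$ is nonnegative, so the outflow boundary term has the right sign, and that the zeroth-order coefficient $a$ is bounded by $M$) give an a priori bound for $B[\tilde S]$ in $W(0,T,H^{1}(\Omega),(H^{1}(\Omega))')$ depending only on $M$, $T$, $D_{\rm B}$, $B_{\rm init}$ and $Q$, hence \emph{independent} of $\tilde S$; this in turn yields a bound for $S[\tilde S]$ in the same space depending only on the data, and lets me take $\mathcal{K}$ as a sufficiently large closed ball of $X$ with $T(\mathcal{K})\subset\mathcal{K}$. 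Second, the Aubin--Lions lemma applied to $W(0,T,H^{1}(\Omega),(H^{1}(\Omega))')\hookrightarrow X$ gives that $T(\mathcal{K})$ is relatively compact in $X$. Third, one verifies continuity of $T$ on $\mathcal{K}$ via a passage-to-the-limit argument in the weak formulation.

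The step I expect to be the main obstacle is the continuity of $T$. Given $\tilde S_{n}\to\tilde S$ in $X$, I would extract a subsequence converging almost everywhere, use the continuity of $\mu$ together with $|\mu(\tilde S_{n})|\le M$ and Lebesgue's dominated convergence theorem to conclude $a_{n}:=\mu(\tilde S_{n})\to a:=\mu(\tilde S)$ strongly in every $L^{p}(\Omega\times(0,T))$ with $p<\infty$, then use the uniform bounds from step (i) to extract subsequences of $B_{n}:=B[\tilde S_{n}]$ and $S_{n}:=S[\tilde S_{n}]$ converging weakly in $W(0,T,H^{1}(\Omega),(H^{1}(\Omega))')$ and, via Aubin--Lions, strongly in $X$. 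This strong convergence is exactly what is needed to pass to the limit in the nonlinear product $a_{n}B_{n}$ that drives the $S_{n}$ equation. Uniqueness in the linear limit problem then forces the whole sequence $T(\tilde S_{n})$ to converge to $T(\tilde S)$, and Schauder's theorem applied to $T:\mathcal{K}\to\mathcal{K}$ delivers the desired fixed point and, together with $B[\tilde S]$, the weak solution of \eqref{advdifreac}.
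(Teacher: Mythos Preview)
Your proposal is correct and follows essentially the same strategy as the paper: freeze the argument of $\mu$, apply the linear theory (Theorem~\ref{unicidadlineal} together with the uniform $W$-bounds of Proposition~\ref{solucionacotada}), invoke Aubin--Lions for compactness in $L^{2}(0,T;L^{2}(\Omega))$, and close with Schauder. The only noteworthy difference is in the continuity step: the paper derives an explicit energy estimate on $S_{Z_n}-S_Z$ and handles the nonlinear term via weak-$\ast$ convergence of $\mu(Z_n)$ in $L^{\infty}$ paired with strong $L^{1}$ convergence of $B_{Z_n}(\bar W_n-\bar V_n)$, whereas you use dominated convergence to get $\mu(\tilde S_n)\to\mu(\tilde S)$ strongly in $L^{p}$ and pass to the limit directly in the weak formulation, identifying the limit by linear uniqueness --- a slightly more streamlined route to the same conclusion.
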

\begin{rmk}
Notice that we assume that $Q$ is nonnegative because of its physical meaning. However, in order to prove Theorem \ref{existencianolineal} it suffices to consider $Q$ with negative part $Q^{-}$ such that $\|Q^{-}\|_{L^{\infty}(\bar{\Omega}\times(0,T))} <\frac{\min(D_{\rm S},D_{\rm B})}{C_{\rm T}^{2}}$ a.e. $t \in (0,T)$, where $C_{\rm T}$ is the constant coming from the Trace Theorem (see e.g., \cite{angel}).   
\end{rmk}
In order to prove Theorem \ref{existencianolineal}, we first investigate the existence and uniqueness of solution of the following linear parabolic system:
\begin{equation}\label{generico}
\left \{ \begin{array}{l r}
S_{t} - {\rm div}( D_{\rm S} \nabla S - \mathbf{Q}S) + cB = 0 & \text{in  }\Omega \times (0,T),
\\
\\
B_{t} - {\rm div}( D_{\rm B} \nabla B - \mathbf{Q}B) - cB = 0 & \text{in  }\Omega \times (0,T),
\\
\\
S(x,0)=S_{\rm init}(x) &   \text{in  }\Omega,\\
\\
B(x,0)=B_{\rm init}(x) &   \text{in  }\Omega,\\
\\
\mathbf{n} \cdot( D_{\rm S}\nabla S - \mathbf{Q}S)= S_{\rm e}Q & \text{in  }\Gamma_{\rm in} \times (0,T),
\\
\\
\mathbf{n} \cdot( D_{\rm B}\nabla B - \mathbf{Q}B)= 0 & \text{in  }\Gamma_{\rm in} \cup  \Gamma_{\rm wall} \times (0,T),
\\
\\
\mathbf{n} \cdot( D_{\rm S} \nabla S - \mathbf{Q}S)= 0 & \text{in  }\Gamma_{\rm wall} \times (0,T),
\\
\\
\mathbf{n} \cdot( D_{\rm S} \nabla S) = 0 & \text{in  }\Gamma_{\rm out} \times (0,T),
\\
\\
\mathbf{n} \cdot( D_{\rm B} \nabla B) = 0 & \text{in  }\Gamma_{\rm out} \times (0,T),
\\
\\
\end{array} \right.
\end{equation}
where $c \in L^{\infty}(\Omega \times (0,T))$. Proceeding analogously to the nonlinear case, we first define the concept of weak solution for this system. 
\begin{defn}
A weak solution of problem (\ref{generico}) is a function $\mathbf{u}=(S,B)$ such that \\
$S,B \in W(0,T,H^{1}(\Omega),(H^{1}(\Omega))')$ and satisfy
\begin{equation}\left \{
\begin{array}{r}
<\mathbf{u}_{t}(\cdot),\boldsymbol{\phi}>_{(	\mathbf{H}^{1}(\Omega))' \times \mathbf{H}^{1}(\Omega)} + \bar{A}(\cdot,\mathbf{u}(\cdot),\boldsymbol{\phi}) = \int_{\Gamma_{\rm in}} Q(x,\cdot)S_{\rm e}(\cdot)v(x) {\rm d}\Gamma_{\rm in} \\
\\
\text{for all } \phi=(v,w) \in \mathbf{H}^{1}(\Omega))\\
\end{array} \right.
\end{equation}
in the sense of $\mathcal{D}'(0,T)$. Here, $\boldsymbol{\psi}=\left ( \begin{array}{l} p \\ q \end{array} \right )$, $\boldsymbol{\phi}=\left ( \begin{array}{l} v \\ w \end{array} \right )$ and the bilinear form $\bar{A}(t,\cdot,\cdot): \mathbf{H}^{1} \times \mathbf{H}^{1} \rightarrow \mathds{R}$ is defined by\\
\\
$\begin{array}{l l} \bar{A}(t,\boldsymbol{\psi},\boldsymbol{\phi}) = & \int_{\Omega} (D_{\rm S}\nabla p(x) - \mathbf{Q}(x,t) p(x)) \nabla v(x) {\rm d}x  + \int_{\Omega} (D_{\rm B}\nabla q(x) - \mathbf{Q}(x,t) q(x)) \nabla w(x) {\rm d}x \\
\\
& + \int_{\Omega} c(x,t) q(x) \big(v(x)-w(x)\big){\rm d}x  + \int_{\Gamma_{\rm out}}Q(x,t)\big(p(x) v(x)+q(x)w(x)\big) {\rm d}\Gamma_{\rm out}. 
\end{array}$
\end{defn}
We now focus on proving the existence and uniqueness of solution of the linear system:
\begin{thm}\label{unicidadlineal}
Under the assumptions of Theorem \ref{existencianolineal}, problem (\ref{generico}) has a unique weak solution $(S,B)$.
\end{thm}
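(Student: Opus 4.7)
The plan is to recast problem (\ref{generico}) as an abstract linear parabolic equation in the Gelfand triple $\mathbf{H}^{1}(\Omega) \hookrightarrow \mathbf{L}^{2}(\Omega) \hookrightarrow (\mathbf{H}^{1}(\Omega))'$ and invoke the classical Lions existence and uniqueness theorem for non-symmetric linear parabolic problems (see, e.g., \cite{lions}, \cite{evans2010partial}): given a bilinear form $\bar{A}(t,\cdot,\cdot)$ that is measurable in $t$, continuous on $\mathbf{H}^{1}\times\mathbf{H}^{1}$ uniformly in $t$, and satisfies a G\aa{}rding-type inequality, together with a source $F\in L^{2}(0,T;(\mathbf{H}^{1}(\Omega))')$ and an initial datum in $\mathbf{L}^{2}(\Omega)$, there exists a unique $\mathbf{u}\in W(0,T,\mathbf{H}^{1}(\Omega),(\mathbf{H}^{1}(\Omega))')$ solving the problem.

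First I would verify the data hypotheses. The right-hand side is the linear functional $F(t)(\boldsymbol{\phi}) = \int_{\Gamma_{\rm in}} Q(x,t)S_{\rm e}(t)v(x){\rm d}\Gamma_{\rm in}$; using $Q\in L^{\infty}(0,T,\mathcal{C}(\bar{\Omega}))$ together with the trace theorem yields $\|F(t)\|_{(\mathbf{H}^{1})'}\leq C_{\rm T}\|Q\|_{L^{\infty}}|S_{\rm e}(t)|$, so $F\in L^{2}(0,T;(\mathbf{H}^{1}(\Omega))')$ since $S_{\rm e}\in L^{2}(0,T)$, and $(S_{\rm init},B_{\rm init})\in\mathbf{L}^{2}(\Omega)$ by hypothesis. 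Measurability in $t$ and continuity in $(\boldsymbol{\psi},\boldsymbol{\phi})$ of $\bar{A}$ follow by Cauchy--Schwarz on each term, bounding the $\Gamma_{\rm out}$ surface integral via the trace theorem and using $Q,c\in L^{\infty}$.

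The technical heart of the proof is the G\aa{}rding inequality. Testing with $\boldsymbol{\psi}=\boldsymbol{\phi}=(v,w)$, the diffusion contribution gives $D_{\rm S}\|\nabla v\|_{L^{2}}^{2}+D_{\rm B}\|\nabla w\|_{L^{2}}^{2}$; the advection contributions $-\int_{\Omega}\mathbf{Q}v\cdot\nabla v\,{\rm d}x$ and $-\int_{\Omega}\mathbf{Q}w\cdot\nabla w\,{\rm d}x$ are handled by Young's inequality with an $\varepsilon$ to absorb a small fraction of the gradient norm at the cost of a multiple of $\|\boldsymbol{\phi}\|_{\mathbf{L}^{2}}^{2}$; the coupling term $\int_{\Omega}c\,w(v-w){\rm d}x$ is bounded in absolute value by $\|c\|_{L^{\infty}}\|\boldsymbol{\phi}\|_{\mathbf{L}^{2}}^{2}$ (again by Young); and the boundary term $\int_{\Gamma_{\rm out}}Q(v^{2}+w^{2}){\rm d}\Gamma_{\rm out}$ is nonnegative because $Q\geq 0$ and can simply be discarded. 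Choosing $\varepsilon$ small enough yields
\[
\bar{A}(t,\boldsymbol{\phi},\boldsymbol{\phi})\geq \alpha\|\boldsymbol{\phi}\|_{\mathbf{H}^{1}}^{2}-\lambda\|\boldsymbol{\phi}\|_{\mathbf{L}^{2}}^{2},
\]
with $\alpha$ of order $\tfrac{1}{2}\min(D_{\rm S},D_{\rm B})$ and $\lambda$ depending on $\|Q\|_{L^{\infty}}$ and $\|c\|_{L^{\infty}}$. This is precisely the point where the remark after Theorem \ref{existencianolineal} becomes visible: a negative part of $Q$ small compared to $\min(D_{\rm S},D_{\rm B})/C_{\rm T}^{2}$ can be absorbed into the diffusion term via the trace theorem without destroying coercivity.

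With these ingredients the abstract theorem delivers existence and uniqueness of $\mathbf{u}\in W(0,T,\mathbf{H}^{1}(\Omega),(\mathbf{H}^{1}(\Omega))')$. Uniqueness can also be checked directly by an energy argument: subtracting two weak solutions, testing with the difference, using the G\aa{}rding inequality, and applying Gronwall's lemma. I expect the main obstacle to be the careful bookkeeping in the G\aa{}rding estimate, since the Danckwerts-type boundary conditions produce mixed volume/boundary terms and the form is non-symmetric, so the advection and coupling contributions have to be split and rebalanced via Young's inequality in just the right way to preserve a genuinely positive $H^{1}$-coercivity constant.
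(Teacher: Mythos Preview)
Your proposal is correct, but it takes a different route from the paper. You treat (\ref{generico}) as a single abstract parabolic equation in the product Gelfand triple $\mathbf{H}^{1}(\Omega)\hookrightarrow\mathbf{L}^{2}(\Omega)\hookrightarrow(\mathbf{H}^{1}(\Omega))'$ and verify the Lions hypotheses (measurability, uniform continuity, G\aa{}rding inequality) for the full bilinear form $\bar{A}$ at once. The paper instead exploits the \emph{triangular} structure of the coupling: since the $B$--equation does not involve $S$, the authors decouple the system into two scalar problems of the form (\ref{singlequation}) and apply Theorem~\ref{teoremalions} twice---first to $B$ (with $\gamma=c$, no source, no inflow data), and then to $S$ using the already-obtained $B$ as a forcing term $f=cB$. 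Your product-space argument is arguably more systematic and would survive even if the coupling were not triangular; the paper's sequential approach is slightly more elementary (only scalar equations are ever analysed) and has the side benefit of making explicit that $B$ is well-posed independently of $S$, a fact reused later in Proposition~\ref{solucionacotada}. The technical estimates---Young on the advection term, sign of the $\Gamma_{\rm out}$ boundary integral thanks to $Q\geq 0$, and the coupling term $\int_{\Omega}c\,w(v-w)$ absorbed into the $L^{2}$ shift $\lambda$---are essentially the same in both proofs.
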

\begin{proof}[Proof of Theorem \ref{unicidadlineal}]
Since the equation for $B$ in System (\ref{generico}) does not depend on $S$, Theorem \ref{unicidadlineal} can be proved by applying Theorem \ref{teoremalions} twice (taking $V=H^{1}(\Omega)$ and $H=L^{2}(\Omega)$) to a single equation, namely
\begin{equation}\label{singlequation}
\left \{
\begin{array}{l r}
R_t =  {\rm div} ( D_{\rm R} \nabla R - \mathbf{Q} R ) + \gamma R - f & \text{in }\Omega\times (0,T), \\
\\
R(x,0) = R_{\rm init}(x) & \text{in } \Omega , \\
\\
\mathbf{n} \cdot ( D_{\rm R} \nabla R - \mathbf{Q} R )  =  G & \text{in } \Gamma_{\rm in}\times (0,T), \\
\\
\mathbf{n} \cdot ( D_{\rm R} \nabla R - \mathbf{Q} R )  = 0 & \text{in } \Gamma_{\rm wall} \times (0,T), \\
\\
\mathbf{n} \cdot ( D_{\rm R} \nabla R) =0 & \text{ in } \Gamma_{\rm out}\times (0,T),\\
\end{array}
\right. 
\end{equation}
with $(D_{\rm R},\gamma,f,R_{\rm init},G)=(D_{\rm B},c,0,B_{\rm init}, 0)$, and then with $(D_{\rm R},\gamma,f,R_{\rm init},G)=(D_{\rm S},0,cB,S_{\rm init},S_e Q)$.
\\
\\
Notice that the application of Theorem \ref{teoremalions} is analogous in both cases. In order to shorten the length of this work we only present a detailed proof for the second one, assuming that $B \in L^{2}(\Omega\times(0,T))$.
We define the bilinear operator $\bar{a}(t,\cdot,\cdot):H^{1}(\Omega)\times H^{1}(\Omega) \rightarrow \mathds{R}$ as  \\
\\
$\bar{a}(t,p,v)= \int_{\Omega} (D_{\rm S}\nabla p(x) - \mathbf{Q}(x,t) p(x)) \nabla v(x) {\rm d}x + \int_{\Gamma_{\rm out}}Q(x,t)p(x) v(x) {\rm d}\Gamma_{\rm out}$  \\
\\
so that a weak solution of equation (\ref{singlequation}) is a function 
$S \in W(0,T,H^{1}(\Omega),(H^{1}(\Omega))')$ satisfying
\begin{equation}\left \{
\begin{array}{l}
<S_{t}(\cdot),\boldsymbol{\phi}>_{(H^{1}(\Omega))' \times H^{1}(\Omega)} + \bar{a}(\cdot,S(\cdot),v) = \int_{\Gamma_{\rm in}} Q(x,\cdot)S_{\rm e}(\cdot)v(x) {\rm d}\Gamma_{\rm in}  - \int_{\Omega} c(x,\cdot)B(x,\cdot)v(x){\rm d} x  \\
\\
\hspace*{65mm}\text{for all } v \in H^{1}(\Omega))\\
\end{array} \right.
\end{equation}
in the sense of $\mathcal{D}'(0,T)$.  
\\
\\
Let us see that $\bar{a}$ satisfies condition (\ref{hipotesisauv1}):\\
For all $p, v \in H^{1}(\Omega)$, function $t \rightarrow \bar{A}(t,p,v)$ is Lebesgue measurable. This follows from the fact that $Q$ is assumed to be Lebesgue measurable function.\\
\\
To be able to apply Theorem \ref{teoremalions}, we need to find $k\in \mathds{R}$ such that $|\bar{a}(t,p,v)|\leq k \|p\|_{H^{1}(\Omega)} \|v\|_{H^{1}(\Omega)} $ for all $p,v \in H^{1}(\Omega)$, $a.e.t\in (0,T)$. Now,
\\
\\
$\begin{array}{l l}|\bar{a}(t,p,v)| \leq & D_{\rm S} \int_{\Omega}|\nabla p(x)| |\nabla v(x)|{\rm d}x + \|Q\|_{L^{\infty}(\bar{\Omega}\times(0,T))}  \int_{\Omega}|p(x)||\nabla v(x)| {\rm d}x\\
\\
&+ \|Q\|_{L^{\infty}(\bar{\Omega}\times(0,T))} \|p\|_{L^{2}(\Gamma_{\rm out})} \|v\|_{L^{2}(\Gamma_{\rm out})}.\end{array}$\\
\\
Then, using the Trace Theorem (see e.g., \cite{angel}), we can conclude that there exist a constant $C_{\rm T}>0$ such that 
\\
\\
$|\bar{a}(t,p,v)| \leq ( D_{\rm S} + (1+C_{\rm T}^{2})\|Q\|_{L^{\infty}(\bar{\Omega}\times(0,T))} \|p\|_{H^{1}(\Omega)} \|v\|_{H^{1}(\Omega)}. $\\
\\
Let us see that $\bar{a}$ satisfies condition (\ref{hipotesisauv2}):\\
We need to find $\alpha,\lambda>0$ such that $\bar{a}(t,p,p) + \lambda  \|p\|_{L^{2}(\Omega)}^{2}\geq \alpha \|p\|_{H^{1}(\Omega)}^{2}$ for all $p \in H^{1}(\Omega)$, a.e. $t\in (0,T)$. We have that
\\
\\
$\bar{a}(t,p,p) =  D_{\rm S} \| \nabla p \|_{L^{2}(\Omega)}^{2} - \int_{\Omega} \mathbf{Q}(x,t) p(x) \nabla p(x) {\rm d}x + \int_{\Gamma_{\rm out}} Q(x,t)p(x)^{2}{\rm d} x.$\\
\\
Applying Young's inequality (see e.g., \cite{evans2010partial}) with $\epsilon>0$, to be chosen later, the following inequality holds:\\
\\
$- \int_{\Omega} \mathbf{Q}(x,t) p(x) \nabla p(x) {\rm d}x \geq -( \epsilon \|p\|_{L^{2}(\Omega)}^{2} + \frac{1}{4\epsilon}\|\nabla p\|_{L^{2}(\Omega)}^{2} )\|Q\|_{L^{\infty}(\bar{\Omega} \times (0,T))} $.\\
\\
Furthermore,
\\
\\
$\int_{\Gamma_{\rm out}} Q(x,t)p(x)^{2}{\rm d} \Gamma_{\rm out} \geq 0$, since $Q$ is nonnegative by assumption.\\
\\
Consequently, \\
\\
$\bar{a}(t,p,p) \geq ( D_{\rm S} - \frac{1}{4\epsilon} \|Q\|_{L^{\infty}(\bar{\Omega} \times (0,T))}) \| \nabla p\|_{L^{2}(\Omega)}^{2} - \epsilon\|Q\|_{L^{\infty}(\bar{\Omega} \times (0,T))})\|p\|_{L^{2}(\Omega)}^{2}.$\\
\\
We choose $\epsilon>0$ such that $$ \alpha_{1}= D_{\rm S} - \frac{1}{4\epsilon} \|Q\|_{L^{\infty}(\bar{\Omega} \times (0,T))}>0,$$ 
and then, we choose $\lambda>0$ such that 
$$\alpha_{2}= \lambda- \epsilon\|Q\|_{L^{\infty}(\bar{\Omega} \times (0,T))} >0.$$ 
Therefore, choosing $\alpha= \min\{ \alpha_{1},\alpha_{2}\}$, one has that 
$$|\bar{a}(t,p,p)| + \lambda \|p\|_{L^{2}(\Omega)}^{2} \geq \alpha \|p\|_{H^{1}(\Omega)}^{2}.$$
Finally, in order to apply Theorem \ref{teoremalions} we need to prove that $f: (0,T) \longrightarrow (H^{1}(\Omega))'$, with $f(t): H^{1}(\Omega) \longrightarrow \mathds{R}$ defined by
$$v \rightarrow \int_{\Gamma_{\rm in}} Q(x,t)S_{\rm e}(t) v(x) {\rm d} \Gamma_{\rm in} + \int_{\Omega}c(x,t)B(x,t)v(x){\rm d} x,$$
is in $L^{2}(0,T,H^{1}(\Omega)')$. \\
Firstly, we must see that $f(t)$ is linear and continuous a.e.  $t\in (0,T)$. The linearity of $f(t)$ follows from the linearity of the integral. Because of this linearity, the continuity property is equivalent to the existence of $k(t)>0$ such that $|f(t)(v)|\leq k(t) \|v\|_{H^{1}(\Omega)}$, $\forall v \in H^{1}(\Omega)$. 
But one has\\
\\
$\begin{array}{r l}|f(t)(v)| = &   |\int_{\Gamma_{\rm in}} Q(x,t)S_{\rm e}(t) v(x) {\rm d} \Gamma_{\rm in} + \int_{\Omega}c(x,t)B(x,t)v(x){\rm d} x |\\
\\
\leq & \|Q(\cdot,t)\|_{L^{\infty}(\bar{\Omega})} |S_{\rm e}(t)| |\Gamma_{\rm in}|^{\frac{1}{2}} \|v\|_{L^{2}(\Gamma_{\rm in})} + \|c(\cdot,t)\|_{L^{\infty}(\Omega)} \|B(\cdot,t)\|_{L^{2}(\Omega)} \|v\|_{L^{2}(\Omega)} \end{array}$ \\
\\
a.e. $t \in (0,T)$, where $|\Gamma_{\rm in}|$ is the Lebesgue measure of $\Gamma_{\rm in}$. Using the Trace Theorem (see e.g., \cite{angel}), we conclude that there exists a constant $C_{\rm T}>0$ such that:
$$\begin{array}{l}|f(t)(v)| \leq C_{\rm T} |\Gamma_{\rm in}|^{\frac{1}{2}}\|Q(\cdot,t)\|_{L^{\infty}(\bar{\Omega})}|S_{\rm e}(t)|\|v\|_{H^{1}(\Omega)} + \|c(\cdot,t)\|_{L^{\infty}(\Omega)} \|B(\cdot,t)\|_{L^{2}(\Omega)} \|v\|_{L^{2}(\Omega)}= k(t)\|v\|_{H^{1}(\Omega)}, \end{array}$$
with $k(t)=  C_{\rm T} |\Gamma_{\rm in}|^{\frac{1}{2}}\|Q(\cdot,t)\|_{L^{\infty}(\bar{\Omega})}|S_{\rm e}(t)| + \|c(\cdot,t)\|_{L^{\infty}(\Omega)} \|B(\cdot,t)\|_{L^{2}(\Omega)}$. \\
Secondly, we must see that $\int_{0}^{T} \|f(t)\|_{(H^{1}(\Omega))'}^{2}{\rm d} t < \infty$.  We use that
$$\|G\|_{(H^{1}(\Omega))'}= \sup_{ \mycom{v \in H^{1}(\Omega)}{\|v\|\leq 1}} |<G,v>|,$$
and thus, by the hypothesis on $Q$, $S_{\rm e}$, $c$ and $B$ we have that\\
\\
$\begin{array}{l l}
\int_{0}^{T} \|f(t)\|_{(H^{1}(\Omega))'}^{2}{\rm d} t & \leq \int_{0}^{T} \big(|\Gamma_{\rm in}|^{\frac{1}{2}}C_{\rm T}|S_{\rm e}(t)| \|Q(\cdot,t)\|_{L^{\infty}(\bar{\Omega})} + \|c(\cdot,t)\|_{L^{\infty}(\Omega)} \|B(\cdot,t)\|_{L^{2}(\Omega)} \big)^{2} {\rm d} t \\
\\
& \leq |\Gamma_{\rm in}|C_{\rm T}^{2} \|Q\|_{L^{\infty}(\bar{\Omega}\times(0,T))}^{2} \|S_{\rm e}\|_{L^{2}(0,T)}^{2}  +\|c\|_{L^{\infty}(\Omega\times(0,T))}^{2} \|B\|_{L^{2}(\Omega\times(0,T))}^{2} \\
\\
& \hspace*{5mm} +2|\Gamma_{\rm in}|^{\frac{1}{2}}C_{\rm T}\|Q\|_{L^{\infty}(\bar{\Omega}\times(0,T))}\|c\|_{L^{\infty}(\Omega\times(0,T))} \|S_{\rm e}\|_{L^{2}(0,T)} \|B\|_{L^{2}(\Omega\times(0,T))} < \infty.
\end{array}$\\
\\
Since we have proved that all the assumptions of Theorem \ref{teoremalions} are satisfied, the proof of Theorem \ref{unicidadlineal} is finished.
\end{proof}
Before proving  Theorem \ref{existencianolineal}, we prove the following result:
\begin{prop}\label{solucionacotada}
If $(S,B)$ is the weak solution of System (\ref{generico}), then
$$\|S\|_{W(0,T,H^{1}(\Omega),(H^{1}(\Omega))')} \leq C \text{ and } \|B\|_{W(0,T,H^{1}(\Omega),(H^{1}(\Omega))')} \leq C,$$
where $C$ depends on $D_{\rm S}$, $D_{\rm B}$, $\|S_{\rm init}\|_{L^{2}(\Omega)}$, $\|B_{\rm init}\|_{L^{2}(\Omega)}$, $\|Q\|_{L^{\infty}(\bar{\Omega}\times(0,T))}$, $\|S_{\rm e}\|_{L^{2}(0,T)}$, $|\Gamma_{\rm in}|$, $T$, $\|c\|_{L^{\infty}(\Omega \times (0,T))}$ and $C_{\rm T}$ (i.e., the constant coming from the Trace Theorem (see e.g., \cite{angel}).
\end{prop}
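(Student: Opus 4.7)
The plan is to derive standard energy estimates from the weak formulation, exploiting the decoupling of the $B$ equation from $S$ in System (\ref{generico}). I would first bound $B$ (both in $L^{\infty}(0,T;L^{2}(\Omega))$ and $L^{2}(0,T;H^{1}(\Omega))$), then use $B$ as a known source to bound $S$ in the same norms, and finally recover the bounds on $S_t$ and $B_t$ in $L^{2}(0,T;(H^{1}(\Omega))')$ directly from the weak equation.

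For the $B$ equation, I would test the weak formulation with $w = B(\cdot,t) \in H^{1}(\Omega)$ to obtain, for a.e.\ $t\in(0,T)$,
\begin{equation*}
\tfrac{1}{2}\tfrac{{\rm d}}{{\rm d}t}\|B(\cdot,t)\|_{L^{2}(\Omega)}^{2}
+ D_{\rm B}\|\nabla B(\cdot,t)\|_{L^{2}(\Omega)}^{2}
- \int_{\Omega}\mathbf{Q}B\,\nabla B\,{\rm d}x
+ \int_{\Gamma_{\rm out}}Q\,B^{2}\,{\rm d}\Gamma_{\rm out}
- \int_{\Omega}c\,B^{2}\,{\rm d}x = 0.
\end{equation*}
The boundary term is nonnegative (since $Q\geq 0$); the advective term is handled by Young's inequality exactly as in the coercivity estimate of Theorem \ref{unicidadlineal}, i.e., $-\int_{\Omega}\mathbf{Q}B\nabla B\geq -(\epsilon\|B\|_{L^{2}}^{2}+\tfrac{1}{4\epsilon}\|\nabla B\|_{L^{2}}^{2})\|Q\|_{L^{\infty}}$, and the reaction term is absorbed using $\|c\|_{L^{\infty}(\Omega\times(0,T))}$. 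Choosing $\epsilon$ so that the $\|\nabla B\|_{L^{2}}^{2}$ coefficient stays strictly positive, I would obtain a differential inequality of the form $\tfrac{{\rm d}}{{\rm d}t}\|B\|_{L^{2}}^{2}+\alpha_{\rm B}\|B\|_{H^{1}}^{2}\leq C_{1}\|B\|_{L^{2}}^{2}$. Gronwall's lemma yields $\|B\|_{L^{\infty}(0,T;L^{2}(\Omega))}^{2}\leq \|B_{\rm init}\|_{L^{2}(\Omega)}^{2}e^{C_{1}T}$, and subsequent integration in time controls $\|B\|_{L^{2}(0,T;H^{1}(\Omega))}^{2}$ by the same data.

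For the $S$ equation, I would test with $v=S(\cdot,t)$. The new ingredients compared to the $B$ case are the inlet boundary term $\int_{\Gamma_{\rm in}}Q\,S_{\rm e}\,S\,{\rm d}\Gamma_{\rm in}$ and the source $\int_{\Omega}c\,B\,S\,{\rm d}x$. I would dominate the former using the Trace Theorem and Young's inequality,
\begin{equation*}
\Bigl|\int_{\Gamma_{\rm in}}Q\,S_{\rm e}\,S\,{\rm d}\Gamma_{\rm in}\Bigr|
\leq |\Gamma_{\rm in}|^{\frac{1}{2}}\|Q\|_{L^{\infty}}|S_{\rm e}(t)|\,C_{\rm T}\|S\|_{H^{1}(\Omega)}
\leq \tfrac{\alpha_{\rm S}}{2}\|S\|_{H^{1}(\Omega)}^{2}
+ C_{2}|S_{\rm e}(t)|^{2},
\end{equation*}
and the latter by Cauchy-Schwarz together with the already obtained $L^{2}$ bound on $B$. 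After absorbing the gradient contributions into the coercive part, I would arrive at a Gronwall-type inequality
\begin{equation*}
\tfrac{{\rm d}}{{\rm d}t}\|S\|_{L^{2}(\Omega)}^{2}
+ \alpha_{\rm S}\|S\|_{H^{1}(\Omega)}^{2}
\leq C_{3}\|S\|_{L^{2}(\Omega)}^{2}
+ C_{4}\bigl(|S_{\rm e}(t)|^{2}+\|B(\cdot,t)\|_{L^{2}(\Omega)}^{2}\bigr),
\end{equation*}
where $C_{3},C_{4}$ depend on $D_{\rm S}$, $\|Q\|_{L^{\infty}}$, $|\Gamma_{\rm in}|$, $C_{\rm T}$, $\|c\|_{L^{\infty}}$. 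Integrating, using $S_{\rm e}\in L^{2}(0,T)$ and the preceding bound on $\|B\|_{L^{2}(0,T;L^{2}(\Omega))}$, yields the desired estimates for $\|S\|_{L^{\infty}(0,T;L^{2}(\Omega))}$ and $\|S\|_{L^{2}(0,T;H^{1}(\Omega))}$.

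It remains to bound the time derivatives. For any $\phi\in H^{1}(\Omega)$ with $\|\phi\|_{H^{1}(\Omega)}\leq 1$, the weak equation for $B$ gives $|\langle B_{t}(t),\phi\rangle|\leq |\bar{a}_{\rm B}(t,B,\phi)| + \|c\|_{L^{\infty}}\|B(\cdot,t)\|_{L^{2}(\Omega)}$, and the continuity estimate for $\bar{a}$ established in the proof of Theorem \ref{unicidadlineal} provides $|\bar{a}_{\rm B}(t,B,\phi)|\leq K\|B(\cdot,t)\|_{H^{1}(\Omega)}$. Squaring and integrating in $t$, the bound on $\|B\|_{L^{2}(0,T;H^{1}(\Omega))}$ yields $\|B_{t}\|_{L^{2}(0,T;(H^{1}(\Omega))')}\leq C$. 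The same argument applies to $S_{t}$, now including the linear-in-$\phi$ data functional $f(t)$ already shown in the proof of Theorem \ref{unicidadlineal} to lie in $L^{2}(0,T;(H^{1}(\Omega))')$, with norm depending on $\|Q\|_{L^{\infty}}$, $\|S_{\rm e}\|_{L^{2}(0,T)}$, $\|c\|_{L^{\infty}}$, $\|B\|_{L^{2}(\Omega\times(0,T))}$, $|\Gamma_{\rm in}|$ and $C_{\rm T}$. Collecting constants proves the proposition. The only delicate step is the interplay between the advection term (which is not sign-definite) and the outflow boundary term, but this is handled uniformly by the same $\epsilon$-choice used to establish coercivity of $\bar{a}$.
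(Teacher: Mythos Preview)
Your proposal is correct and follows essentially the same energy-estimate strategy as the paper: exploit the decoupling to bound $B$ first, feed that into the $S$ estimate, and read off the dual bounds on the time derivatives from the weak formulation together with the continuity of the bilinear form. The only methodological difference is cosmetic: where you test with $B$ and $S$ directly and close via Gronwall's lemma, the paper introduces the weighted unknowns $\bar S=e^{-\lambda t}S$, $\bar B=e^{-\lambda t}B$, integrates over $(0,T)$, and chooses $\lambda$ large enough to make all lower-order coefficients positive---which is just Gronwall in disguise. Both routes use the same ingredients (Young's inequality on the advective term, nonnegativity of the $\Gamma_{\rm out}$ contribution, the Trace Theorem on the $\Gamma_{\rm in}$ term), and the resulting constants depend on exactly the same quantities.
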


\begin{proof}[Proof of Proposition \ref{solucionacotada}]
From the first equation in System (\ref{generico}), it follows that \\
\\
$\begin{array}{l l}
\|\frac{{\rm d} S}{{\rm d} t}\|_{L^{2}(0,T,(H^{1}(\Omega))')} &= \sup_{ \mycom{\phi \in L^{2}(0,T,H^{1}(\Omega)),}{\|\phi\|\leq 1} } |<\frac{{\rm d} S}{{\rm d} t},\phi>| 
\\
\\
&= \sup_{ \mycom{\phi \in L^{2}(0,T,H^{1}(\Omega)),}{\|\phi\|\leq 1 }}|<{\rm div}(D_{\rm S}\nabla S - \mathbf{Q}S) - cB,\phi>|.
\end{array}$\\
\\
If $C_{\rm T}$ is the constant coming from the Trace Theorem (see e.g., \cite{angel}), one has that
\begin{equation}\label{desigualdaderivada}
\begin{array}{l l}
\|\frac{{\rm d} S}{{\rm d} t}\|_{L^{2}(0,T,(H^{1}(\Omega))')} \leq & C_{\rm T} \|Q\|_{L^{\infty}(\bar{\Omega}\times(0,T))} \|S_{\rm e}\|_{L^{2}(0,T)} |\Gamma_{\rm in}|^{\frac{1}{2}} + \|c\|_{L^{\infty}(\Omega\times(0,T))} \|B\|_{L^{2}(0,T,L^{2}(\Omega))} \\
\\
& +(D_{\rm S}+(1+C_{\rm T}^{2})\|Q\|_{L^{\infty}(\bar{\Omega}\times(0,T))})\|S\|_{L^{2}(0,T,H^{1}(\Omega))}.
\end{array}
\end{equation}
Similarly, from the second equation in System (\ref{generico}), it follows that
\begin{equation}\label{desigualdaderivadab}
\|\frac{{\rm d} B}{{\rm d} t}\|_{L^{2}(0,T,(H^{1}(\Omega))')} \leq (D_{\rm B}+ (1+C_{\rm T}^{2})\|Q\|_{L^{\infty}(\bar{\Omega}\times(0,T))}+\|c\|_{L^{\infty}(\Omega\times(0,T))}) \|B\|_{L^{2}(0,T,H^{1}(\Omega))}.
\end{equation}
Now, in order to obtain an estimate for $\|S\|_{L^{2}(0,T,H^{1}(\Omega))}$, we consider $\lambda \geq 0$ and the variable $\bar{S}=e^{-\lambda t}S$, that fulfill
\begin{equation}\label{ecuacionparacotacion}
\bar{S}_{t}+\lambda \bar{S} - {\rm div}(D_{\rm S}\nabla \bar{S}-\mathbf{Q}\bar{S})+c\bar{B}=0\\
\end{equation}
Multiplying (\ref{ecuacionparacotacion}) by $\bar{S}$ (here, this multiplication is in the sense of the duality product $<\cdot,\cdot>_{(H^{1}(\Omega))'\times H^{1}(\Omega)}$) and integrating, one obtains
\begin{equation}\label{variacionalsub}
\begin{array}{l}
\frac{1}{2} \|\bar{S}(T)\|_{L^{2}(\Omega)}^{2} + \lambda \int_{0}^{T} \|\bar{S}(\tau)\|_{L^{2}(\Omega)}^{2}{\rm d} \tau + \int_{0}^{T} \int_{\Gamma_{\rm out}} Q(x,\tau) \bar{S}^{2}(x,\tau){\rm d} x {\rm d} \tau \\
\\
+ D_{\rm S} \int_{0}^{T} \|\nabla \bar{S}(\tau) \|_{L^{2}(\Omega)}^{2} {\rm d} \tau =  \frac{1}{2} \|S_{\rm init}\|_{L^{2}(\Omega)}^{2} + \underbrace{\int_{0}^{T} e^{-\lambda \tau} \int_{\Gamma_{\rm in}} Q(x,\tau)S_{\rm e}(\tau)\bar{S}(x,\tau){\rm d} x {\rm d} \tau}_{(\ast)} \\
\\
+ \underbrace{\int_{0}^{T} \int_{\Omega} \mathbf{Q}(x,\tau) \bar{S}(x,\tau) \nabla \bar{S}(x,\tau) {\rm d}x {\rm d} \tau}_{(\ast \ast)} - \underbrace{\int_{0}^{T} \int_{\Omega} c(x,\tau)\bar{S}(x,\tau)\bar{B}(x,\tau) {\rm d} x {\rm d} \tau}_{(\ast \ast \ast)}.\\
\end{array}
\end{equation}
Applying Young's Inequality (see e.g., \cite{evans2010partial}) in ($\ast$), ($\ast \ast$) and ($\ast \ast \ast$) (with $\epsilon_{1}>0$, $\epsilon_{2}>0$ and $\epsilon=\frac{1}{2}$, respectively) and the Trace Theorem (see e.g., \cite{angel}) in ($\ast$), it follows
\begin{equation}\label{variacionalsub2}
\begin{array}{l}
\frac{1}{2} \|\bar{S}(T)\|_{L^{2}(\Omega)}^{2}  + (D_{\rm S}- \|Q\|_{L^{\infty}(\bar{\Omega}\times(0,T))}(\frac{1}{4\epsilon_{2}} + \frac{|\Gamma_{\rm in}|^{\frac{1}{2}} C_{\rm T}^{2}}{4\epsilon_{1}}))\|\nabla \bar{S}(\tau) \|_{L^{2}(0,T,L^{2}(\Omega))}^{2} \\
\\  
+(\lambda - \|Q\|_{L^{\infty}(\bar{\Omega}\times(0,T))}(\epsilon_{2} + \frac{|\Gamma_{\rm in}|^{\frac{1}{2}} C_{\rm T}^{2}}{4\epsilon_{1}} )-\frac{\|c\|_{L^{\infty}(\Omega \times (0,T))}}{2}) \|\bar{S}\|_{L^{2}(0,T,L^{2}(\Omega))}^{2}\\
\\
\leq  \frac{1}{2} \|S_{\rm init}\|_{L^{2}(\Omega)}^{2} + \epsilon_{1}\|Q\|_{L^{\infty}(\bar{\Omega}\times(0,T))} \|S_{\rm e}\|_{L^{2}(0,T)}^{2}|\Gamma_{\rm in}|^{\frac{1}{2}} + \frac{\|c\|_{L^{\infty}(\Omega \times (0,T))} \|\bar{B}\|_{L^{2}(0,T,L^{2}(\Omega))}^{2}}{2}.
\end{array}
\end{equation}
Considering the variable $\bar{B}=e^{-\lambda t}B$ and using the same reasoning as the one followed above, one has that 
\begin{equation}\label{variacionalb}
\begin{array}{l}
\frac{1}{2} \|\bar{B}(T)\|_{L^{2}(\Omega)}^{2} +  (\lambda - \epsilon_{3}\|Q\|_{L^{\infty}(\bar{\Omega}\times(0,T))} -\|c\|_{L^{\infty}(\Omega \times (0,T))}) \|\bar{B}\|_{L^{2}(0,T,L^{2}(\Omega))}^{2} \\ \\
\hspace*{1cm} + (D_{\rm B}-\frac{\|Q\|_{L^{\infty}(\bar{\Omega}\times(0,T))}}{4\epsilon_{3}})\|\nabla \bar{B}(\tau) \|_{L^{2}(0,T,L^{2}(\Omega))}^{2} \leq  \frac{1}{2} \|B_{\rm init}\|_{L^{2}(\Omega)}^{2}.
\end{array}
\end{equation}
Choosing $\epsilon_{1}$, $\epsilon_{2}$ and $\epsilon_{3}$ such that $D_{\rm S}\geq \|Q\|_{L^{\infty}(\bar{\Omega}\times(0,T))} (\frac{1}{4\epsilon_{2}} + \frac{|\Gamma_{\rm in}|^{\frac{1}{2}} C_{\rm T}^{2}}{4\epsilon_{1}})$, $\epsilon_{3}\geq \frac{\|Q\|_{L^{\infty}(\bar{\Omega}\times(0,T))}}{4D_{\rm B}}$ and $\lambda > \|Q\|_{L^{\infty}(\bar{\Omega}\times(0,T))}\max( \epsilon_{3}, \epsilon_{2} + \frac{C_{\rm T}^{2}|\Gamma_{\rm in}|^{\frac{1}{2}}}{4\epsilon_{1}}) + \|c\|_{L^{\infty}(\Omega \times (0,T))}$, it follows that 
\begin{equation}\label{acotacionsyb1}
\begin{array}{l l}
\|\bar{B}\|_{L^{2}(0,T,H^{1}(\Omega))}^{2} \leq & \alpha_{1}\|B_{\rm init}\|_{L^{2}(\Omega)}^{2},\\
\\
\|\bar{S}\|_{L^{2}(0,T,H^{1}(\Omega))}^{2} \leq & \alpha_{2}( \frac{1}{2} \|S_{\rm init}\|_{L^{2}(\Omega)}^{2} + \epsilon_{1}\|Q\|_{L^{\infty}(\bar{\Omega}\times(0,T))} \|S_{\rm e}\|_{L^{2}(0,T)}^{2}|\Gamma_{\rm in}|^{\frac{1}{2}})\\
\\
&+ \alpha_{1}\alpha_{2}\frac{\|c\|_{L^{\infty}(\Omega \times (0,T))}}{2} \|B_{\rm init}\|_{L^{2}(\Omega)}^{2},\\ 
\end{array}
\end{equation}
where $\alpha_{1},\alpha_{2}>0$ depend on $|\Gamma_{\rm in}|, \|c\|_{L^{\infty}(\Omega \times (0,T))}, \|Q\|_{L^{\infty}(0,T)}, C_{\rm T}, D_{\rm S}$ and $D_{\rm B}$. 
\\
\\
Furthermore, it is straight forward to see that
\begin{equation}\label{acotacionsyb2}
\begin{array}{l}
\|B\|_{L^{2}(0,T,L^{2}(\Omega))}^{2} \leq e^{2\lambda T} \|\bar{B}\|_{L^{2}(0,T,L^{2}(\Omega))}^{2} ,\\
\\
\|S\|_{L^{2}(0,T,H^{1}(\Omega))}^{2} \leq e^{2\lambda T} \|\bar{S}\|_{L^{2}(0,T,H^{1}(\Omega))}^{2}. \\ 
\end{array}
\end{equation}
From (\ref{desigualdaderivada}), (\ref{acotacionsyb1}) and (\ref{acotacionsyb2}), it follows that
$$\|S\|_{W(0,T,H^{1}(\Omega),(H^{1}(\Omega))')}, \|B\|_{W(0,T,H^{1}(\Omega),(H^{1}(\Omega))')} \leq C,$$
where $C$ depends on $T, \|S_{\rm init}\|_{L^{2}(\Omega)},\|B_{\rm init}\|_{L^{2}(\Omega)},D_{\rm S},D_{\rm B},\|Q\|_{L^{\infty}(\bar{\Omega}\times(0,T))},$ \\
$\|S_{\rm e}\|_{L^{2}(0,T)}$, $\|c\|_{L^{\infty}(\Omega \times (0,T))}, |\Gamma_{\rm in}|$ and $C_{\rm T}$.
\end{proof}
\begin{proof}[Proof of Theorem \ref{existencianolineal}]
In order to prove the existence of solution, we apply Schauder Fixed Point Theorem (see e.g., \cite{evans2010partial}). We have to choose a Banach space $X$ and a compact and convex subset $K\subset X$.

We consider the Banach Space $W(0,T,H^{1}(\Omega),(H^{1}(\Omega))')$, which is compactly embedded in $L^{2}(0,T,L^{2}(\Omega))$ (see Lemma \ref{aubin}).

If $Z\in W(0,T,H^{1}(\Omega),(H^{1}(\Omega))')$ and we solve the linear System (\ref{generico}) with $c(x,t)=\mu(Z(x,t))$, Theorem \ref{unicidadlineal} proves that there exists a unique weak solution $(S_{\rm Z},B_{\rm Z})$ with $S_{\rm Z}, B_{\rm Z} \in W(0,T,H^{1}(\Omega),(H^{1}(\Omega))')$. Furthermore, Proposition \ref{solucionacotada} shows that 
$$\|B_{Z}\|_{W(0,T,H^{1}(\Omega),(H^{1}(\Omega))')} \leq C \text{ and }\|S_{Z}\|_{W(0,T,H^{1}(\Omega),(H^{1}(0,T))')} \leq C,$$ 
where $C$ depends (among others) on the norm of $\mu(Z(x,t))$. Since $\mu(\cdot) \in L^{\infty}(\mathds{R})$ it follows that for all $Z \in W(0,T,H^{1}(\Omega),(H^{1}(\Omega))')$, we have
$$\|B_{Z}\|_{W(0,T,H^{1}(\Omega),(H^{1}(\Omega))')} \leq \bar{C} \text{ and }\|S_{Z}\|_{W(0,T,H^{1}(\Omega),(H^{1}(\Omega))')}\leq \bar{C},$$
where $\bar{C}$ is a constant depending (among others) on $\|\mu\|_{L^{\infty}(\mathds{R})}$.\\
\\
If we define the set 
\begin{equation} \label{defK}
K := \{ z \in W( 0,T,H^{1}(\Omega),(H^{1}(\Omega))'): \|z\|_{W(0,T,H^{1}(\Omega),(H^{1}(\Omega))')} \leq \bar{C}\},
\end{equation}
from Lemma \ref{aubin} and the definition of compact operator, $K$ is a compact set of the Banach Space $X:= L^{2}(0,T,L^{2}(\Omega))$.\\

Let us define the application $A: K \rightarrow K$ by $A(Z)=S_{\rm Z}$. We prove Theorem \ref{existencianolineal} by showing that $A$ has a fixed point. In order to apply Schauder Fixed Point Theorem, it is enough to prove that $A$ is continuous.

In this direction, if $\{ Z_{\rm n} \}_{n} \subset K $, $Z \in K$ are such that $\| Z_{\rm n} - Z \|_{X} \stackrel{n \rightarrow \infty}{\longrightarrow}0$, we must prove that $$\|A(Z_{\rm n})-A(Z)\|_{X} = \|S_{\rm Z_{\rm n}} - S_{\rm Z}\|_{L^{2}(0,T,L^{2}(\Omega))} \stackrel{n \rightarrow \infty}{\longrightarrow}0.$$

Let $(S_{\rm Z_{\rm n}},B_{\rm Z_{\rm n}})$ and $(S_{\rm Z},B_{\rm Z})$ be the weak solutions of linear system (\ref{generico}) when $c(x,t)= \mu(Z_{\rm n}(x,t))$ and $c(x,t)=\mu(Z(x,t))$, respectively. We denote $V_{n}=S_{\rm Z_{\rm n}}-S_{\rm Z}$ and $W_{n}=B_{\rm Z_{\rm n}}-B_{\rm Z}$. Then $(V_{n},W_{n})$ is a weak solution of:
\begin{equation*}\label{resta}
\left \{ \begin{array}{l r}
(V_{n})_{t} - {\rm div}( D_{\rm S} \nabla V_{n} - \mathbf{Q}V_{n}) + \mu(Z)B_{\rm Z} - \mu(Z_{\rm n})B_{\rm Z_{\rm n}} = 0 & \text{in  }\Omega \times (0,T),
\\
\\
(W_{n})_{t} - {\rm div}( D_{\rm B} \nabla W_{n} - \mathbf{Q}W_{n}) - \mu(Z)B_{\rm Z} + \mu(Z_{\rm n})B_{\rm Z_{\rm n}} = 0 & \text{in  }\Omega \times (0,T),
\end{array} \right.
\end{equation*}
with the initial and boundary conditions
\begin{equation*}\label{restabound}
\left \{ \begin{array}{l c r}
V_{n}(x,0)= 0 &  & \text{in  }\Omega,\\
\\
W_{n}(x,0)= 0 &  & \text{in  }\Omega,\\
\\
\mathbf{n} \cdot( D_{\rm S}\nabla V_{n} - \mathbf{Q}V_{n})= 0 & &\text{in  }\Gamma_{\rm in} \cup  \Gamma_{\rm wall} \times (0,T),
\\
\\
\mathbf{n} \cdot( D_{\rm B}\nabla W_{n} - \mathbf{Q}W_{n})= 0 & &\text{in  }\Gamma_{\rm in} \cup  \Gamma_{\rm wall} \times (0,T),
\\
\\
\mathbf{n} \cdot( D_{\rm S} \nabla V_{n}) = 0 & &\text{in  }\Gamma_{\rm out} \times (0,T),
\\
\\
\mathbf{n} \cdot( D_{\rm B} \nabla W_{n}) = 0 & &\text{in  }\Gamma_{\rm out} \times (0,T).
\end{array} \right.
\end{equation*}

Given $\lambda>0$, then $\bar{V}_{n}= e^{-\lambda t} V_{n}$ and $\bar{W}_{n}= e^{-\lambda t} W_{n}$ fulfill:
\begin{equation}\label{variacionalconlambda}
\begin{array}{l}
(\bar{V}_{n})_{\rm t} + \lambda \bar{V}_{n} - {\rm div}( D_{\rm S} \nabla \bar{V}_{n} - \mathbf{Q}\bar{V}_{n}) + e^{-\lambda t}\Big(\mu(Z)B_{\rm Z} - \mu(Z_{\rm n})B_{\rm Z_{\rm n}}\Big)=0,\\
\\
(\bar{W}_{n})_{\rm t} + \lambda \bar{W}_{n} - {\rm div}( D_{\rm B} \nabla \bar{W}_{n} - \mathbf{Q}\bar{W}_{n}) - e^{-\lambda t}\Big(\mu(Z)B_{\rm Z} - \mu(Z_{\rm n})B_{\rm Z_{\rm n}}\Big)=0.
\end{array}
\end{equation}

Multiplying the first equation of (\ref{variacionalconlambda}) by $\bar{V}_{n}$ and integrating, one obtains:
\begin{equation}\label{variacionalv}
\begin{array}{l}
\frac{1}{2} \|\bar{V}_{n}(T)\|_{L^{2}(\Omega)}^{2} + \lambda \int_{0}^{T} \|\bar{V}_{n}(\tau)\|_{L^{2}(\Omega)}^{2}{\rm d} \tau + \int_{0}^{T} \int_{\Gamma_{\rm out}} Q(x,\tau) \bar{V}_{n}^{2}(x,\tau){\rm d} x {\rm d} \tau \\ \\
+ D_{\rm S} \int_{0}^{T} \|\nabla \bar{V}_{n}(\tau) \|_{L^{2}(\Omega)}^{2} {\rm d} \tau =\int_{0}^{T} \int_{\Omega} \mathbf{Q}(x,\tau) \bar{V}_{n}(x,\tau) \nabla \bar{V}_{n}(x,\tau) {\rm d}x {\rm d} \tau \\
\\
+ \int_{0}^{T} e^{-\lambda \tau}\int_{\Omega} \Big(\mu(Z_{\rm n}(x,\tau))B_{\rm Z_{\rm n}}(x,\tau)- \mu(Z(x,\tau))B_{\rm Z}(x,\tau) \Big) \bar{V}_{n}(x,\tau) {\rm d} x {\rm d}\tau .
\end{array}
\end{equation}

Similarly, if we multiply the second equation in (\ref{variacionalconlambda}) by $\bar{W}_{n}$, we have
\begin{equation}\label{variacionalw}
\begin{array}{l}
\frac{1}{2} \|\bar{W}_{n}(T)\|_{L^{2}(\Omega)}^{2} + \lambda \int_{0}^{T} \|\bar{W}_{n}(\tau)\|_{L^{2}(\Omega)}^{2}{\rm d} \tau + \int_{0}^{T} \int_{\Gamma_{\rm out}} Q(x,\tau) \bar{W}_{n}^{2}(x,\tau){\rm d} x {\rm d} \tau\\ \\
 + D_{\rm B} \int_{0}^{T} \|\nabla \bar{W}_{n}(\tau) \|_{L^{2}(\Omega)}^{2} {\rm d} \tau  = \int_{0}^{T} \int_{\Omega} \mathbf{Q}(x,\tau) \bar{W}_{n}(x,\tau) \nabla \bar{W}_{n}(x,\tau) {\rm d}x {\rm d} \tau\\
\\
+ \int_{0}^{T} e^{-\lambda \tau} \int_{\Omega} \Big(\mu(Z(x,\tau))B_{\rm Z}(x,\tau)- \mu(Z_{\rm n}(x,\tau))B_{\rm Z_{\rm n}}(x,\tau)\Big ) \bar{W}_{n}(x,\tau) {\rm d} x {\rm d}\tau .
\end{array}
\end{equation}

Summing equations (\ref{variacionalv}) and (\ref{variacionalw}) it follows:
\begin{equation}\label{variacionalsuma}
\begin{array}{l}
\frac{1}{2} \Big(\|\bar{V}_{n}(T)\|_{L^{2}(\Omega)}^{2}+\|\bar{W}_{n}(T)\|_{L^{2}(\Omega)}^{2}\Big) + \lambda \int_{0}^{T} (\|\bar{V}_{n}(\tau)\|_{L^{2}(\Omega)}^{2} + \|\bar{W}_{n}(\tau)\|_{L^{2}(\Omega)}^{2}){\rm d} \tau\\
\\
+ \int_{0}^{T} \|Q(\cdot,\tau)\|_{L^{\infty}(\bar{\Omega})} \Big(\|\bar{V}_{n}(\tau)\|_{L^{2}(\Gamma_{\rm out})}^{2}+ \|\bar{W}_{n}(\tau)\|_{L^{2}(\Gamma_{\rm out})}^{2} \Big) {\rm d} \tau \\
\\
+ \int_{0}^{T} (D_{\rm S} \|\nabla \bar{V}_{n}(\tau)\|_{L^{2}(\Omega)}^{2} + D_{\rm B} \|\nabla \bar{W}_{n}(\tau)\|_{L^{2}(\Omega)}^{2}){\rm d} \tau \\
\\
= \int_{0}^{T} \int_{\Omega} \mathbf{Q}(x,\tau) \Big(\bar{V}_{n}(x,\tau)\nabla \bar{V}_{n}(x,\tau) + \bar{W}_{n}(x,\tau) \nabla \bar{W}_{n}(x,\tau)\Big) {\rm d}x {\rm d} \tau \\
\\
+ \int_{0}^{T} e^{-\lambda \tau} \int_{\Omega} \Big(\mu(Z(x,\tau))B_{\rm Z}(x,\tau)- \mu(Z_{\rm n}(x,\tau))B_{\rm Z_{\rm n}}(x,\tau)\Big) \Big(\bar{W}_{n}(x,\tau)-\bar{V}_{n}(x,\tau)\Big) {\rm d} x {\rm d}\tau
\end{array}
\end{equation}

For the last term in (\ref{variacionalsuma}) we have that
\begin{equation*}
\begin{array}{l}
\int_{0}^{T} e^{-\lambda \tau} \int_{\Omega} \Big(\mu(Z(x,\tau))B_{\rm Z}(x,\tau)- \mu(Z_{\rm n}(x,\tau))B_{\rm Z_{\rm n}}(x,\tau)\Big)\Big(\bar{W}_{n}(x,\tau)-\bar{V}_{n}(x,\tau)\Big) {\rm d} x {\rm d} \tau\\
\\
= \int_{0}^{T} e^{-\lambda \tau} \int_{\Omega} \mu(Z(x,\tau))\Big(B_{\rm Z}(x,\tau)-B_{\rm Z_{\rm n}}(x,\tau)\Big) \Big(\bar{W}_{n}(x,\tau)-\bar{V}_{n}(x,\tau)\Big) {\rm d} x {\rm d}\tau\\
\\
 + \int_{0}^{T}e^{-\lambda \tau}\int_{\Omega} B_{\rm Z_{\rm n}}(x,\tau) \Big(\mu(Z(x,\tau))-\mu(Z_{\rm n}(x,\tau))\Big) \Big(\bar{W}_{n}(x,\tau)-\bar{V}_{n}(x,\tau)\Big) {\rm d} x {\rm d} \tau\\
\\
\leq  \frac{3}{2}\|\mu\|_{L^{\infty}(\mathds{R})} \int_{0}^{T} \|\bar{W}_{n}(\tau)\|_{L^{2}(\Omega)}^{2} {\rm d} \tau + \frac{1}{2}\|\mu\|_{L^{\infty}(\mathds{R})} \int_{0}^{T} \|\bar{V}_{n}(\tau)\|_{L^{2}(\Omega)}^{2} {\rm d} \tau\\
\\
+ \int_{0}^{T} |\mu(Z(x,\tau))-\mu(Z_{\rm n}(x,\tau)) | | B_{\rm Z_{\rm n}}(x,\tau)||\bar{W}_{n}(x,\tau)-\bar{V}_{n}(x,\tau)| {\rm d} x {\rm d} \tau. \\
\end{array}
\end{equation*}

Moreover, by applying Young's Inequality (see e.g., \cite{evans2010partial}) with $\epsilon_{1}>0$, which will be chosen below, it follows
\\
\\
$\int_{0}^{T}\int_{\Omega}  \mathbf{Q}(x,\tau) \bar{V}_{n}(x,\tau) \nabla \bar{V}_{n}(x,\tau){\rm d} x {\rm d} \tau$ 
\\
\\
$\hspace*{2cm}\leq \|Q\|_{L^{\infty}(\bar{\Omega}\times(0,T))} \int_{0}^{T} (\epsilon_{1} \|\bar{V}_{n}(\tau) \|_{L^{\rm 2}(\Omega)}^{2} + \frac{1}{4\epsilon_{1}} \| \nabla \bar{V}_{n} (\tau)\|_{L^{\rm 2}(\Omega)}^{2} ) {\rm d} \tau .$\\
\\
We apply the same reasoning for $\bar{W}_{n}$ with some positive constant $\epsilon_{2}>0$. 
\\
\\
Coming back to (\ref{variacionalsuma}) it follows that
\begin{equation}\label{variacionalsuma2}
\begin{array}{l}
\frac{1}{2} \Big(\|\bar{V}_{n}(T)\|_{L^{2}(\Omega)}^{2}+\|\bar{W}_{n}(T)\|_{L^{2}(\Omega)}^{2}\Big) + \int_{0}^{T} Q(x,\tau) (\|\bar{V}_{n}(\tau)\|_{L^{2}(\Gamma_{\rm out})}^{2}+ \|\bar{W}_{n}(\tau)\|_{L^{2}(\Gamma_{\rm out})}^{2}) {\rm d} \tau
\\
\\
+(D_{\rm S} - \frac{\|Q\|_{L^{\infty}(\bar{\Omega}\times(0,T))}}{4\epsilon_{1}}) \int_{0}^{T}\|\nabla \bar{V}_{n}(\tau)\|_{L^{2}(\Omega)}^{2}{\rm d} \tau + (D_{\rm B}- \frac{\|Q\|_{L^{\infty}(\bar{\Omega}\times(0,T))}}{4\epsilon_{2}})\int_{0}^{T} \|\nabla \bar{W}_{n}(\tau)\|_{L^{2}(\Omega)}^{2}){\rm d} \tau \\
\\
+ \underbrace{\Big(\lambda - \epsilon_{1}\|Q\|_{L^{\infty}(\bar{\Omega}\times(0,T))} - \frac{\|\mu\|_{L^{\infty}(\mathds{R})} }{2}}_{:=C}\Big ) \int_{0}^{T} \|\bar{V}_{n}(\tau)\|_{L^{2}(\Omega)}^{2}{\rm d} \tau\\
\\
+ (\lambda - \epsilon_{2}\|Q\|_{L^{\infty}(\bar{\Omega}\times(0,T))} - \frac{3}{2}\|\mu\|_{L^{\infty}(\mathds{R})})\int_{0}^{T} \|\bar{W}_{n}(\tau)\|_{L^{2}(\Omega)}^{2}{\rm d} \tau\\
\\
\leq \int_{0}^{T} \int_{\Omega} |\mu(Z(x,\tau))-\mu(Z_{n}(x,\tau)) | | B_{\rm Z_{\rm n}}(x,\tau)||(\bar{W}_{n}(x,\tau)-\bar{V}_{n}(x,\tau))| {\rm d} x {\rm d} \tau.
\end{array}
\end{equation}

If $\epsilon_{1}$, $\epsilon_{2}$ and $\lambda$ are chosen such that $\epsilon_{1} \geq \frac{\|Q\|_{L^{\infty}(\bar{\Omega}\times(0,T))}}{4D_{\rm S}}$, $\epsilon_{2} \geq \frac{\|Q\|_{L^{\infty}(\bar{\Omega}\times(0,T))}}{4D_{\rm B}}$ and 
$$\lambda > \|Q\|_{L^{\infty}(\bar{\Omega}\times(0,T))}\max(\epsilon_{1},\epsilon_{2}) + \frac{3}{2}\|\mu\|_{L^{\infty}(\mathds{R})},$$ 
one has
\begin{equation}\label{desigualdadfinal1}
\begin{array}{l}
\int_{0}^{T} \| \bar{V}_{n}(\tau) \|_{L^{2}(\Omega)}^{2}{\rm d} \tau \leq 2 \int_{0}^{T} \int_{\Omega} |\mu(Z(x,\tau))-\mu(Z_{\rm n}(x,\tau)) | | B_{\rm Z_{\rm n}}(x,\tau)||(\bar{W}_{n}(x,\tau)-\bar{V}_{n}(x,\tau))| {\rm d} x {\rm d} \tau.
\end{array}
\end{equation}

To prove that the right hand side of (\ref{desigualdadfinal1}) converges to $0$ as $n \rightarrow \infty$, we use the following steps:
\begin{enumerate}
\item Since $\|Z_{n}-Z\|_{L^{2}(\Omega \times (0,T))} \stackrel{n \rightarrow \infty}{\longrightarrow}0$, using Theorem \ref{fuertepuntual}, there exists a subsequence $\{ Z_{n_{k}} \}_{k} \subset \{Z_{n}\}_{n}$ such that $Z_{\rm n_{\rm k}} \rightarrow Z$ a.e. in $\Omega \times (0,T)$. Then, since $\mu$ is continuous, $\mu(Z_{\rm n_{k}}) \rightarrow \mu(Z)$ a.e. in $\Omega \times (0,T)$. For simplicity, we denote $\{Z_{n_{k}}\}_{k}= \{ Z_{k} \}_{k}$.

\item Since $\| \mu(Z_{k}) \|_{L^{\infty}(Q)} \leq \|\mu\|_{L^{\infty}(\mathds{R})}<+\infty$, by applying Theorem \ref{acotadaconvergente} using that $L^{1}(\Omega \times (0,T))$ is separable and $(L^{1}(\Omega \times (0,T)))'= L^{\infty}(\Omega \times (0,T))$ ), there exists a subsequence $\{ \mu(Z_{k_{j}}) \}_{j}$ weak-$\ast$ convergent to some $\omega \in L^{\infty}(\Omega \times (0,T))$. For simplicity, we denote $\{ Z_{k_{j}}\}_{j}=\{ Z_{j}\}_{j}$. 
\end{enumerate}

Due to steps 1 and 2, we conclude that $\{ \mu(Z_{\rm j}) \}_{j}$ is weak-$\ast$ convergent to $\mu(Z)$.

\begin{enumerate}\setcounter{enumi}{2}
\item $B_{\rm Z_{j}} \in K$, since $(S_{\rm Z_{j}}, B_{\rm Z_{j}})$ is solution of (\ref{generico}) with $c=\mu(Z_{j})$. Moreover, since $K \subset X$ is compact, there exists a subsequence $\{B_{Z_{j_{i}}} \}_{i} \subset \{ B_{\rm Z_{j}} \}_{j}$ such that there exist some $B \in X$ fulfilling $\| B_{\rm Z_{j_{i}}} - B \|_{X} \stackrel{i \rightarrow \infty}{\longrightarrow}0$. For simplicity, we denote $\{ Z_{j_{i}}\}_{i}=\{ Z_{i}\}_{i}$.

\item We define $$\bar{K}= \{ z \in W(0,T,H^{1}(\Omega),(H^{1}(\Omega))') \text{ }:\text{ } \|z\|_{W(0,T,H^{1}(\Omega),(H^{1}(\Omega))')} \leq 4\bar{C} \},$$ 
where $\bar{C}$ is the constant appearing in the definition of $K$ in \eqref{defK}. Notice that $\bar{K}$ is a compact set of $X$  (see Lemma \ref{aubin}). Since $W_{i}-V_{i}= B_{\rm Z_{i}}-S_{\rm Z_{i}} - B_{\rm Z}+S_{\rm Z} \in \bar{K}$, using the same reasoning as the one followed above, one obtains that there exists a subsequence $\{ W_{i_{r}}-V_{i_{r}} \}_{r} \subset \{W_{i} - V_{i}\}_{i}$ and $P \in X$ such that $\| (W_{i_{r}}-V_{i_{r}}) - P\|_{X} \stackrel{r \rightarrow \infty}{\longrightarrow}0$. For simplicity, we denote $\{Z_{i_{r}}\}_{r}= \{Z_{r}\}_{r}$. 
\end{enumerate}

By steps 3 and 4, we conclude that $B_{\rm r} (W_{r}-V_{r}) \subset L^{1}(\Omega \times (0,T))$ and $ \|B_{\rm r} (W_{r}-V_{r})-BP\|_{L^{1}(\Omega \times (0,T)) } \stackrel{r \rightarrow \infty}{\longrightarrow} 0$.\\

Furthermore, since $\{Z_{r}\}_{r} \subset \{Z_{j}\}_{j}$, it also follows that $\{ \mu(Z_{\rm r}) \}_{r}$ is weak-$\ast$ convergent to $\mu(Z)$. Using Theorem \ref{convergenciaproducto}, if follows that
\begin{equation}
\begin{array}{l}
\int_{0}^{T} \int_{\Omega} \underbrace{|\mu(Z(x,\tau))-\mu(Z_{\rm r}(x,\tau)) |}_{L^{\infty}(Q)} \underbrace{| B_{\rm Z_{r}}(x,\tau)||(\bar{W}_{r}(x,\tau)-\bar{V}_{r}(x,\tau))|}_{L^{1}(Q)} {\rm d} x {\rm d} \tau\\
\\
\hspace*{4cm}\stackrel{r \rightarrow \infty}{\longrightarrow} \int_{0}^{T} \int_{\Omega} 0 \cdot B(x,\tau) \cdot P(x,\tau) {\rm d}\tau {\rm d} x.
\end{array}
\end{equation}

From (\ref{desigualdadfinal1}), this implies that 
$$\int_{0}^{T}\int_{\Omega} e^{-2\lambda \tau} |S_{\rm Z_{r}}(x,\tau) - S_{\rm Z}(x,\tau) |^{2}{\rm d} x{\rm d} \tau \stackrel{r \rightarrow \infty}{\longrightarrow}0,$$
but since $\min_{\tau \in [0,T]} e^{-2\lambda \tau} = e^{-2\lambda T}$, one has that 
$$\|S_{\rm Z_{r}}-S_{Z}\|_{L^{2}(\Omega\times (0,T))}  \stackrel{r \rightarrow \infty}{\longrightarrow}0.$$

Finally, we prove that $\|S_{\rm Z_{n}}-S_{Z}\|_{L^{2}(\Omega\times (0,T))}  \stackrel{n \rightarrow \infty}{\longrightarrow}0$ (convergence of the whole sequence instead of subsequence) by reduction to absurdum. Let us assume that this is not true. Then, there exists $\epsilon>0$ and a subsequence $\{ S_{\rm Z_{n_{l}}} \}_{l} \subset \{ S_{\rm Z_{n}} \}_{n}$ such that
\begin{equation}\label{paracontradiccion}
\|S_{\rm Z_{n_{l}}} - S_{Z}\|_{L^{2}(\Omega\times (0,T))}>\epsilon, \hspace{1cm} \forall l \in \mathds{N}.
\end{equation}
If we now proceed as above, we can find a subsection $\{ S_{\rm Z_{n_{m}}} \}_{m}  \subset \{ S_{\rm Z_{n_{l}}} \}_{l} $ such that 
$$ \|S_{\rm Z_{n_{m}}}-S_{Z}\|_{L^{2}(\Omega\times (0,T))}  \stackrel{m \rightarrow \infty}{\longrightarrow}0,$$
which contradicts (\ref{paracontradiccion}).       
\end{proof}

Now, we are interested in studying the nonnegativity and boundedness properties of solutions $B$ and $S$.

\begin{thm}[Nonnegativity and boundedness of $B$]\label{bpositiva} 
Under assumptions of Theorem \ref{existencianolineal}:
\begin{enumerate}[(i)]
\item If $B_{\rm init} \geq 0$ in $\Omega$, then $B\geq 0$ in $\Omega\times(0,T)$.
\item If $B_{\rm init} \in L^{\infty}(\Omega)$, then $B(x,t) \leq \|B_{\rm init}\|_{L^{\infty}(\Omega)} e^{\|\mu\|_{L^{\infty}(\mathds{R})} t}$ a.e. $(x,t) \in \Omega\times(0,T)$.
\end{enumerate}
\end{thm}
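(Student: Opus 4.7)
Since $\mu\in L^{\infty}(\mathds{R})$, the coefficient $c(x,t):=\mu(S(x,t))$ satisfies $\|c\|_{L^{\infty}(\Omega\times(0,T))}\leq\|\mu\|_{L^{\infty}(\mathds{R})}$, so $B$ solves a linear parabolic problem of the type covered by Theorem~\ref{unicidadlineal}. The plan is to derive both statements by Stampacchia-type truncation carried out on the shifted variable $\bar B:=e^{-\lambda t}B$ already used in Proposition~\ref{solucionacotada}.

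For Part~(i) I would fix $\lambda>0$ large and test the weak formulation for $\bar B$ against $\phi:=-\bar B^-\in H^{1}(\Omega)$. Standard chain-rule identities and the Danckwerts boundary conditions turn this into the energy identity
\[
\tfrac12\tfrac{d}{dt}\|\bar B^-\|_{L^{2}(\Omega)}^{2}+D_{\mathrm B}\|\nabla\bar B^-\|_{L^{2}(\Omega)}^{2}+\int_{\Gamma_{\rm out}}Q(\bar B^-)^{2}\,d\sigma+\int_\Omega(\lambda-\mu(S))(\bar B^-)^{2}\,dx=\int_\Omega\mathbf Q\,\bar B^-\cdot\nabla\bar B^-\,dx,
\]
in which the outflow integral is $\geq 0$ because $Q\geq 0$. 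Bounding the right-hand side by Young's inequality and choosing $\epsilon>0$, $\lambda>0$ exactly as in Proposition~\ref{solucionacotada} so that $D_{\mathrm B}-\|Q\|_{L^{\infty}}/(4\epsilon)\geq 0$ and $\lambda-\epsilon\|Q\|_{L^{\infty}}-\|\mu\|_{L^{\infty}(\mathds{R})}\geq 0$, the identity collapses to $\tfrac{d}{dt}\|\bar B^-\|_{L^{2}(\Omega)}^{2}\leq 0$. Together with $\bar B^-(\cdot,0)\equiv 0$ (from $B_{\rm init}\geq 0$), this forces $\bar B^-\equiv 0$, i.e.\ $B=e^{\lambda t}\bar B\geq 0$ a.e.

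For Part~(ii), with $B\geq 0$ at hand, I would compare $B$ with the space-constant supersolution $M(t):=\|B_{\rm init}\|_{L^{\infty}(\Omega)}e^{\|\mu\|_{L^{\infty}(\mathds{R})}t}$ of the ODE $M'=\|\mu\|_{L^{\infty}(\mathds{R})}M$ and test the weak equation with $V^{+}:=(B-M)^{+}\in H^{1}(\Omega)$. Writing $B=V+M$, using $\nabla M=0$ and $V=V^{+}$ on $\{V>0\}$, the resulting identity splits into a quadratic piece in $V^{+}$ (of the same form as in Part~(i), controllable by Young and by $D_{\mathrm B}\|\nabla V^{+}\|_{L^{2}(\Omega)}^{2}$) plus the linear-in-$V^{+}$ contribution
\[
M(t)\Bigl[\int_\Omega(\mu(S)-\|\mu\|_{L^{\infty}(\mathds{R})})V^{+}\,dx+\int_\Omega\mathbf Q\cdot\nabla V^{+}\,dx-\int_{\Gamma_{\rm out}}QV^{+}\,d\sigma\Bigr].
\]
The first integrand is $\leq 0$ a.e.\ because $\mu(S)\leq\|\mu\|_{L^{\infty}(\mathds{R})}$ and $V^{+}\geq 0$; the remaining two combine, via Green's formula together with $\mathbf Q\cdot\mathbf n=0$ on $\Gamma_{\rm wall}$ and the incompressibility $\mathrm{div}\,\mathbf Q=0$, into $-\int_{\Gamma_{\rm in}}QV^{+}\,d\sigma\leq 0$. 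A Gronwall argument applied to the resulting $\tfrac{d}{dt}\|V^{+}\|_{L^{2}(\Omega)}^{2}\leq C\|V^{+}\|_{L^{2}(\Omega)}^{2}$, combined with $V^{+}(\cdot,0)\equiv 0$ (since $B_{\rm init}\leq\|B_{\rm init}\|_{L^{\infty}(\Omega)}$ a.e.), yields $V^{+}\equiv 0$, that is, $B\leq M(t)$.

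The delicate step I expect to dominate the write-up is the sign cancellation $\int_\Omega\mathbf Q\cdot\nabla V^{+}\,dx-\int_{\Gamma_{\rm out}}QV^{+}\,d\sigma=-\int_{\Gamma_{\rm in}}QV^{+}\,d\sigma$ in Part~(ii), which is what ultimately produces the sharp exponent $\|\mu\|_{L^{\infty}(\mathds{R})}$ in the bound. It rests both on the tangency $\mathbf Q\cdot\mathbf n=0$ on $\Gamma_{\rm wall}$ (immediate from $\mathbf Q=(0,0,-Q)$ in the cylindrical geometry of Figure~\ref{fig:bioreactor}) and on $\mathrm{div}\,\mathbf Q=0$, i.e.\ on $Q$ being independent of the vertical coordinate, a physically natural incompressibility assumption implicit in the modelling but not stated among the hypotheses. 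Justifying this cancellation carefully, or, if one does not wish to impose incompressibility, absorbing the offending term by a Young-type argument at the cost of replacing $\|\mu\|_{L^{\infty}(\mathds{R})}$ with a slightly larger exponent (as is done in Proposition~\ref{solucionacotada}), is where most of the technical care should be concentrated.
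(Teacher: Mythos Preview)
Your approach is essentially the same as the paper's: Stampacchia-type truncation plus energy estimates. For (i), the paper tests the $B$-equation directly against $-B^{-}$ (without the exponential shift) and closes with Gronwall's inequality, obtaining $\|B^{-}(t)\|_{L^{2}(\Omega)}^{2} \leq \|B^{-}(0)\|_{L^{2}(\Omega)}^{2}\exp(\ldots)=0$; your use of $\bar B=e^{-\lambda t}B$ with $\lambda$ large is an equivalent device. For (ii), the paper sets $U=M-B$ (your $-V$), writes a strong-form equation for $U$ (its system~\eqref{sistemau}), and tests against $-U^{-}$, which is precisely your $V^{+}=(B-M)^{+}$; it then proceeds exactly as in (i).

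Your final paragraph is in fact more careful than the paper. In deriving \eqref{sistemau}, the paper silently uses ${\rm div}(\mathbf{Q}M)=0$, i.e.\ ${\rm div}\,\mathbf{Q}=0$, which is not among the stated hypotheses on $Q\in L^{\infty}(0,T,\mathcal{C}(\bar\Omega))$. Your identification of this as the delicate step is correct: without incompressibility the linear-in-$V^{+}$ contribution carries an extra $-M\int_{\Omega}({\rm div}\,\mathbf{Q})\,V^{+}\,{\rm d}x$ of uncontrolled sign, and the sharp exponent $\|\mu\|_{L^{\infty}(\mathds{R})}$ would indeed need to be weakened as you suggest. The paper does not address this.
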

\begin{proof}
We define the new variables $B^{+}=\max(B,0)$ and $B^{-}=-\min(B,0)$, then $B=B^{+} - B^{-}$ and the first statement of Theorem \ref{bpositiva} can be reformulated as
$$ B^{-}(x,0)=0 \text{ in }\Omega \Rightarrow B^{-}(x,t)=0 \hspace{1cm}\text{ in } \Omega \times (0,T).$$
Multiplying the second equation of (\ref{advdifreac}) by $B^{-}$ and integrating, one obtains
\begin{equation*}
\begin{array}{l l}
\frac{1}{2} \int_{0}^{t}  \frac{{\rm d}}{{\rm d} \tau} \| B^{-}(\tau) \|_{\rm L^{2}(\Omega)}^{2} {\rm d} \tau & = \int_{0}^{t} \int_{\Omega} \mathbf{Q}(x,\tau)B^{-}(x,\tau)\nabla B^{-}(x,\tau) {\rm d} x {\rm d} \tau \\
\\
&-\int_{0}^{t} \int_{\Gamma_{\rm out}} Q(x,\tau) (B^{-}(x,\tau))^{2} {\rm d}x {\rm d} \tau -  \int_{0}^{t} \int_{\Omega} D_{\rm B} (\nabla B^{-}(x,\tau))^{2}{\rm d} x {\rm d} \tau \\
\\
&+ \int_{0}^{t} \int_{\Omega} \mu(S(x,\tau))B^{-}(x,\tau)^{2} {\rm d}x {\rm d} \tau. \\
\end{array}
\end{equation*}
Applying Young's inequality with $\epsilon>0$ (that will be specified below), one has:
\begin{equation*}
\begin{array}{l l}
\frac{1}{2} \int_{0}^{t}  \frac{{\rm d}}{{\rm d} \tau} \| B^{-}(\tau) \|_{\rm L^{2}(\Omega)}^{2} {\rm d} \tau \leq &  (\epsilon \|Q\|_{L^{\infty}(\bar{\Omega}\times(0,T))} - D_{\rm B} ) \int_{0}^{t} \| \nabla B^{-}(\tau)\|_{L^{2}(\Omega)}^{2} {\rm d} \tau \\
\\
& +(\frac{\|Q\|_{L^{\infty}(\bar{\Omega}\times(0,T))}}{4\epsilon}+\|\mu\|_{L^{\infty}(\mathds{R})}) \int_{0}^{t}  \| B^{-}(\tau)\|_{L^{2}(\Omega)}^{2} {\rm d} \tau.\\
\end{array}
\end{equation*}
Choosing $\epsilon$ such that $\epsilon\|Q\|_{L^{\infty}(\bar{\Omega}\times(0,T))}  - D_{\rm B} \leq 0$ and applying Gronwall's Inequality in its integral form (see e.g., \cite{teschl}), one has:
$$ \| B^{-}(t) \|_{\rm L^{2}(\Omega)}^{2} \leq \underbrace{\| B^{-}(0) \|_{\rm L^{2}(\Omega)}^{2}}_{=0 \text{ by hypothesis}} \underbrace{e^{  2(\frac{\|Q\|_{L^{\infty}(\bar{\Omega}\times(0,T))}}{4\epsilon}+\|\mu\|_{L^{\infty}(\mathds{R})})t}}_{\geq 0}=0.$$

Consequently $B^{-}=0$ in $\Omega\times (0,T)$ and the statement (i) of the theorem is proved.\\

Now, we denote $U(x,t)= \|B_{\rm init} \|_{L^{\infty}(\Omega)} e^{\|\mu\|_{L^{\infty}(\mathds{R})} t} - B(x,t)$. We want to prove that $U(x,t)\geq 0$ in $\Omega\times(0,T)$. It fulfills \\
\begin{equation}\label{sistemau} 
\left \{
\begin{array}{l r}
U_{\rm t}= {\rm div}(D_{\rm B} \nabla U - \mathbf{Q}U) + \mu(S)U + \alpha e^{\|\mu\|_{L^{\infty}(\mathds{R})} t}  & \text{in } \Omega\times(0,T), \\
\\ 
U(x,0)=\|B_{\rm init}\|_{L^{\infty}(\Omega)}-B_{\rm init}(x) & \text{in } \Omega, \\ 
\\
\mathbf{n}\cdot (D_{\rm B}\nabla U - \mathbf{Q}U)=  Q(t)\|B_{\rm init} \|_{L^{\infty}(\Omega)} e^{\|\mu\|_{L^{\infty}(\mathds{R})} t} & \text{in } \Gamma_{\rm in} \times(0,T), \\ 
\\

\mathbf{n}\cdot (D_{\rm B}\nabla U - \mathbf{Q}U) =0 & \text{in }\Gamma_{\rm wall}\times(0,T),\\
\\ 
\mathbf{n}\cdot (D_{\rm B}\nabla U )= 0 & \text{in } \Gamma_{\rm out} \times(0,T), \\ 
\\ 
\end{array} \right.
\end{equation}
where $\alpha= ( \|\mu\|_{L^{\infty}(\mathds{R})}-\mu(S)) \|B_{\rm init}\|_{L^{\infty}(\Omega)}$. We define the new variables $U^{+}=\max(U,0)$ and $U^{-}=-\min(U,0)$, and proceeding as we did previously with $B$, it follows that
$$\| U^{-}(t) \|_{\rm L^{2}(\Omega)}^{2} \leq \| U^{-}(0) \|_{L^{2}(\Omega)}^{2} e^{2( \|\mu\|_{L^{\infty}(\Omega)} + \frac{\|Q\|_{L^{\infty}(\bar{\Omega}\times(0,T))}}{4\epsilon})t},$$
where $\epsilon$ is such that $\epsilon \|Q\|_{L^{\infty}(\bar{\Omega}\times(0,T))} - D_{\rm B} \leq 0$. Since $U(x,0)\geq 0$, then $\| U^{-}(0) \|_{\rm L^{2}(\Omega)}^{2}=0$ and, consequently, $U^{-}=0$ in $\Omega\times (0,T)$ and the statement (ii) of the theorem is proved.
\end{proof}

\begin{thm}[Nonnegativity and boundedness of $S$]\label{spositiva} 
Under assumptions of Theorem \ref{existencianolineal} and Theorem \ref{bpositiva}-(ii), if $S_{\rm e}\geq 0$ and $S_{\rm init} \geq 0$ in $\Omega$, $\mu$ is lipschitz and $\mu(0)=0$, then $S\geq 0$ in $\Omega\times(0,T)$. Furthermore,  if $S_{\rm init} \in L^{\infty}(\Omega)$, $S_{\rm e} \in L^{\infty}(0,T)$ and $\mu(z)> 0$ for $z> 0$, then $S \leq \max (\|S_{\rm init}\|_{\rm L^{\infty}(\Omega)}, \|S_{\rm e}\|_{\rm L^{\infty}(0,T)})$ in $\Omega\times(0,T)$.
\end{thm}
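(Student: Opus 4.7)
The plan is to mimic the two-step energy argument of Theorem~\ref{bpositiva} for $B$, now combining the Lipschitz and sign hypotheses on $\mu$ with the $L^{\infty}$ bound on $B$ supplied by Theorem~\ref{bpositiva}-(ii).

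For the \textbf{nonnegativity} I would split $S=S^{+}-S^{-}$ with $S^{\pm}\ge 0$ of disjoint support and test the weak formulation of the first equation in~(\ref{advdifreac}) with $v=-S^{-}\in H^{1}(\Omega)$. On $\{S^{-}>0\}$ one has $S=-S^{-}$ and $\nabla S=-\nabla S^{-}$, so the time derivative contributes $\tfrac{1}{2}\tfrac{d}{dt}\|S^{-}\|_{L^{2}(\Omega)}^{2}$, the diffusion gives $D_{\rm S}\|\nabla S^{-}\|_{L^{2}(\Omega)}^{2}$, the outlet trace yields $\int_{\Gamma_{\rm out}}Q(S^{-})^{2}\ge 0$, and the Danckwerts inlet datum becomes $\int_{\Gamma_{\rm in}}S_{\rm e}Q\,S^{-}\ge 0$ by the sign hypotheses on $S_{\rm e}$ and $Q$. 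The advection cross term $\int \mathbf{Q} S^{-}\nabla S^{-}$ is absorbed into $D_{\rm S}\|\nabla S^{-}\|_{L^{2}(\Omega)}^{2}$ via Young's inequality for a suitably small $\epsilon$, exactly as in the proof of Theorem~\ref{bpositiva}.

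The genuinely new ingredient is the reaction integral $\int_{\Omega} \mu(S)\,B\,S^{-}\,\mathrm{d}x$. On $\{S^{-}>0\}$ one has $S\le 0$, so Lipschitz continuity of $\mu$ together with $\mu(0)=0$ forces $|\mu(S)|\le L_{\mu}\,S^{-}$ pointwise; combining this with the bound $0\le B\le \|B_{\rm init}\|_{L^{\infty}(\Omega)}\,e^{\|\mu\|_{L^{\infty}(\mathds{R})}T}$ from Theorem~\ref{bpositiva}-(ii) gives $\int \mu(S)B\,S^{-}\le L_{\mu}\,\|B_{\rm init}\|_{L^{\infty}(\Omega)}\,e^{\|\mu\|_{L^{\infty}(\mathds{R})}T}\,\|S^{-}\|_{L^{2}(\Omega)}^{2}$. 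Collecting every contribution leaves $\tfrac{d}{dt}\|S^{-}\|_{L^{2}(\Omega)}^{2}\le C\,\|S^{-}\|_{L^{2}(\Omega)}^{2}$, so Gronwall's inequality applied with $S^{-}(\cdot,0)\equiv 0$ (since $S_{\rm init}\ge 0$) forces $S^{-}\equiv 0$, i.e.\ $S\ge 0$ on $\Omega\times(0,T)$.

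For the \textbf{upper bound} I would set $M:=\max(\|S_{\rm init}\|_{L^{\infty}(\Omega)},\|S_{\rm e}\|_{L^{\infty}(0,T)})$ and work with $U:=M-S$. A direct check using the boundary conditions of~(\ref{advdifreac}) gives $U(\cdot,0)=M-S_{\rm init}\ge 0$ and $\mathbf{n}\cdot(D_{\rm S}\nabla U-\mathbf{Q}U)=(M-S_{\rm e})Q\ge 0$ on $\Gamma_{\rm in}$, both sign-favorable. Moreover, the first part of the theorem delivers $S\ge 0$, so under $\mu(0)=0$ and $\mu(z)>0$ for $z>0$ the reaction appearing in the $U$-equation is $+\mu(S)B\ge 0$ (using also $B\ge 0$ from Theorem~\ref{bpositiva}-(i)). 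These favorable signs make the previous energy scheme, applied to $U^{-}$, close just as before: the source contribution $-\int \mu(S)B\,U^{-}\le 0$ can be dropped from the right-hand side of the estimate, Young handles the advection cross term, and Gronwall together with $U^{-}(\cdot,0)=0$ yields $U^{-}\equiv 0$, that is $S\le M$. The main obstacle is really the control of $\int\mu(S)B\,S^{-}$ in the nonnegativity step, where all three hypotheses (Lipschitz continuity, $\mu(0)=0$, and $L^{\infty}$ boundedness of $B$) must cooperate to turn an a priori merely bounded $\mu(S)$ into a factor proportional to $S^{-}$; the upper bound then reduces to replaying the same estimate on $U$.
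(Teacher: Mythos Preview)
Your proposal is correct and follows essentially the same approach as the paper's proof: test the first equation against $-S^{-}$ (respectively $-U^{-}$ for $U=M-S$), use the Lipschitz property with $\mu(0)=0$ together with the $L^{\infty}$ bound on $B$ from Theorem~\ref{bpositiva}-(ii) to control the reaction integral by $C\|S^{-}\|_{L^{2}(\Omega)}^{2}$, absorb the advection cross term via Young's inequality, drop the sign-favorable boundary and source contributions, and conclude with Gronwall. The paper's argument is identical in structure; the only cosmetic difference is that the paper is slightly less explicit than you are about invoking $B\ge 0$ (from Theorem~\ref{bpositiva}-(i)) when dropping $-\int \mu(S)B\,U^{-}$ in the upper-bound step.
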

\begin{proof}
We define the new variables $S^{+}=\max(S,0)$ and $S^{-}=-\min(S,0)$. Then, multiplying the first equation of (\ref{advdifreac}) by $S^{-}$ and integrating it follows\\
\begin{equation}\label{positividads1}
\begin{array}{l}
\frac{1}{2} \int_{0}^{t} \frac{{\rm d}}{{\rm d} \tau} \| S^{-}(\tau) \|_{\rm L^{2}(\Omega)}^{2} {\rm d} \tau= \int_{0}^{t} \int_{\Omega} \mathbf{Q}(x,\tau)S^{-}(x,\tau)\nabla S^{-}(x,\tau) {\rm d} x {\rm d} \tau \\
\\
- \int_{0}^{t} \int_{\Omega} D_{\rm S} (\nabla S^{-}(x,\tau))^{2}{\rm d} x {\rm d} \tau  + \int_{0}^{t} \int_{\Omega} \mu(S(x,\tau))B(x,\tau)S^{-}(x,\tau) {\rm d}x {\rm d} \tau \\
\\
- \int_{0}^{t} \int_{\Gamma_{\rm in}} Q(x,\tau) S_{\rm e}(\tau) S^{-}(x,\tau) {\rm d}x {\rm d} \tau -\int_{0}^{t} \int_{\Gamma_{\rm out}} Q(x,\tau) (S^{-}(x,\tau))^{2} {\rm d}x {\rm d} \tau. 
\end{array}
\end{equation}
Under the hypothesis formulated on $\mu$, there exists a constant $C_{\rm L}$ such that\\
\\
$\begin{array}{l l} |\int_{0}^{t} \int_{\Omega} \mu(S(x,\tau))B(x,\tau)S^{-}(x,\tau) {\rm d}x {\rm d} \tau| & \leq C_{\rm L}\int_{0}^{t} \int_{\Omega} |S(x,\tau)| |B(x,\tau)| S^{-}(x,\tau) {\rm d}x {\rm d} \tau\\
\\
& \leq C_{\rm L} \|B\|_{L^{\infty}(\Omega\times(0,T))} \int_{0}^{t} \int_{\Omega} (S^{-}(x,\tau))^{2}{\rm d}x {\rm d} \tau.\end{array}$\\
\\
Furthermore, since $S_{\rm e}$, $Q$ and $S^{-}$ are nonnegative, from equation (\ref{positividads1}) one obtains
\begin{equation}\label{positividads2}
\begin{array}{l}
\frac{1}{2} \int_{0}^{t} \frac{{\rm d}}{{\rm d} \tau} \| S^{-}(\tau) \|_{\rm L^{2}(\Omega)}^{2} {\rm d} \tau \leq C_{\rm L} \|B\|_{L^{\infty}(\Omega\times(0,T))} \int_{0}^{t} \int_{\Omega} (S^{-}(x,\tau))^{2} {\rm d}x {\rm d} \tau \\
\\
-  \int_{0}^{t} \int_{\Omega} D_{\rm S} (\nabla S^{-}(x,\tau))^{2}{\rm d} x {\rm d} \tau  + \int_{0}^{t} \int_{\Omega} \mathbf{Q}(x,\tau)S^{-}(x,\tau)\nabla S^{-}(x,\tau) {\rm d} x {\rm d} \tau.
\end{array}
\end{equation}
Moreover, applying Young's inequality (see e.g., \cite{evans2010partial}) with $\epsilon>0$ (that will be specified below), one has:
\begin{equation*}
\begin{array}{l}
\frac{1}{2} \int_{0}^{t} \frac{{\rm d}}{{\rm d} \tau} \| S^{-}(\tau) \|_{\rm L^{2}(\Omega)}^{2}{\rm d}\tau  \leq  (\epsilon \|Q\|_{L^{\infty}(\bar{\Omega}\times(0,T))} - D_{\rm S} ) \int_{0}^{T} \| \nabla S^{-}(\tau)\|_{L^{2}(\Omega)}^{2} {\rm d} \tau
\\
\\
\hspace*{2cm} +(\frac{\|Q\|_{L^{\infty}(\bar{\Omega}\times(0,T))}}{4\epsilon}+ C_{\rm L} \|B\|_{L^{\infty}(\Omega\times(0,T))}) \int_{0}^{T}  \| S^{-}(\tau)\|_{L^{2}(\Omega)}^{2} {\rm d} \tau. \\
\\
\end{array}
\end{equation*}
Choosing $\epsilon$ such that $\epsilon \|Q\|_{L^{\infty}(\bar{\Omega}\times(0,T))} - D_{\rm S} \leq 0$ and applying Gronwall's Inequality in its integral form (see e.g., \cite{teschl}), one has:\\
\\
$ \| S^{-}(t) \|_{\rm L^{2}(\Omega)}^{2} \leq \underbrace{\| S^{-}(0) \|_{\rm L^{2}(\Omega)}^{2}}_{=0 \text{ by hypothesis }} e^{2(\frac{\|Q\|_{L^{\infty}(\bar{\Omega}\times(0,T))}}{4\epsilon}+ C_{\rm L} \|B\|_{L^{\infty}(\Omega\times(0,T))})t }=0$.\\
Consequently $S^{-}=0$ in $\Omega\times (0,T)$ and the first statement of the theorem is proved.
\\
\\
Now, we denote $\beta =\max (\|S_{\rm init} \|_{L^{\infty}(\Omega)},\|S_{\rm e} \|_{L^{\infty}(0,T)} )$ and $U(x,t)= \beta - S(x,t)$. We want to prove that $U(x,t)\geq 0$ in $\Omega\times(0,T)$. It fulfills
\begin{equation}\label{sistemau2} 
\left \{
\begin{array}{l r}
U_{\rm t}= {\rm div}(D_{\rm S} \nabla U - \mathbf{Q}U) + \mu(S)B   & \text{in } \Omega\times(0,T), \\
\\ 
U(x,0)=\beta-S_{\rm init}(x) & \text{in } \Omega, \\ 
\\
\mathbf{n}\cdot (D_{\rm S}\nabla U - \mathbf{Q}U)=  Q ( \beta - S_{\rm e}) & \text{in } \Gamma_{\rm in} \times(0,T), \\ 
\\
\mathbf{n}\cdot (D_{\rm S}\nabla U - \mathbf{Q}U) =0 & \text{in }\Gamma_{\rm wall}\times(0,T),\\
\\ 
\mathbf{n}\cdot (D_{\rm S}\nabla U )= 0 & \text{in } \Gamma_{\rm out} \times(0,T). \\ 
\\ 
\end{array} \right.
\end{equation}
We define the new variables $U^{+}=\max(U,0)$ and $U^{-}=-\min(U,0)$, and using the same reasoning as the one followed in Theorem \ref{bpositiva} one has 
$$ \| U^{-}(t) \|_{\rm L^{2}(\Omega)}^{2} \leq \| U^{-}(0) \|_{L^{2}(\Omega)}^{2} e^{\frac{\|Q\|_{L^{\infty}(\bar{\Omega}\times(0,T))}}{2\epsilon}t},$$
where $\epsilon$ such that $\epsilon \|Q\|_{L^{\infty}(\bar{\Omega}\times(0,T))} - D_{\rm S} \leq 0$. \\
Since $U(x,0)\geq 0$, then $\| U^{-}(0) \|_{\rm L^{2}(\Omega)}^{2}=0$ and, consequently, $U^{-}=0$ in $\Omega\times (0,T)$ and the second statement of the theorem is proved.
\end{proof}

\begin{rmk}
: Notice that we assume that $Q$, $S_{\rm e}$, $B_{\rm init}$ and $S_{\rm init}$ are nonnegative and essentially bounded because of their physical meaning. The assumption $\mu(0)=0$ is due to the fact that if there is no substrate concentration, no reaction is produced; the assumption $\mu(z)> 0$ if $z> 0$ follows from the fact that if there is substrate, the reaction makes the substrate concentration decrease and the biomass concentration increase (see System (\ref{advdifreac})). These two assumptions are commonly used in bioreactor theory (see e.g.,~\cite{Rapaport20081052}). Furthermore, the assumption of considering that function $\mu$ is essentially bounded is a caused by the fact that microorganisms have a maximum specific growth rate.   
\end{rmk}

Finally, we prove the uniqueness of solution of System (\ref{advdifreac}).
\begin{thm}[Uniqueness of solution]\label{unicidadnolineal2}
Under the hypothesis of Theorem \ref{bpositiva} and if $\mu$ is Lipschitz, then System (\ref{advdifreac}) has a unique weak solution $(S,B)$.
\end{thm}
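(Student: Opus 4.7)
The plan is to argue by contradiction assuming two weak solutions $(S_1,B_1)$ and $(S_2,B_2)$, and to show that their difference vanishes via an energy estimate, mimicking the exponential-weight technique already used in the proofs of Proposition \ref{solucionacotada} and Theorem \ref{existencianolineal}. Set $V = S_1-S_2$, $W = B_1-B_2$. Because the linear boundary data $S_{\rm e}Q$ and the initial data cancel, $(V,W)$ is a weak solution of a homogeneous advection-diffusion system with zero initial and boundary data, and the only source term is the difference of reactions, namely $\mu(S_1)B_1-\mu(S_2)B_2$.

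The first key manipulation is the splitting
\[
\mu(S_1)B_1-\mu(S_2)B_2 = \mu(S_1)\,W + \bigl(\mu(S_1)-\mu(S_2)\bigr)B_2,
\]
which isolates a "linear in $W$" piece (controlled by $\|\mu\|_{L^{\infty}(\mathds{R})}$) and a "Lipschitz in $V$" piece. The Lipschitz part is estimated pointwise by $C_{\rm L}|V|\,|B_2|$, and at this point I would invoke Theorem \ref{bpositiva}(ii) (whose hypotheses are part of the assumptions of the theorem) to bound $\|B_2\|_{L^{\infty}(\Omega\times(0,T))}$ by $\|B_{\rm init}\|_{L^{\infty}(\Omega)}e^{\|\mu\|_{L^{\infty}(\mathds{R})}T}$. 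This is exactly what turns the a priori nonlinear coupling into something linear and Gronwall-friendly.

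Next, following the recipe of Proposition \ref{solucionacotada}, I would introduce $\bar V = e^{-\lambda t}V$, $\bar W = e^{-\lambda t}W$ for a parameter $\lambda>0$ to be fixed, test the equation for $\bar V$ against $\bar V$ and the equation for $\bar W$ against $\bar W$, and sum the identities on $(0,T)$. The advection cross-terms $\int\mathbf{Q}\bar V\nabla\bar V$ and $\int\mathbf{Q}\bar W\nabla\bar W$ are absorbed by the diffusion via Young's inequality with parameters $\epsilon_1,\epsilon_2$ chosen so that $D_{\rm S}-\|Q\|_{L^{\infty}}/(4\epsilon_1)>0$ and $D_{\rm B}-\|Q\|_{L^{\infty}}/(4\epsilon_2)>0$, the outflow boundary integrals are nonnegative since $Q\geq 0$, and the source terms produce contributions bounded by a constant times $\|\bar V\|_{L^2}^2+\|\bar W\|_{L^2}^2$, with a constant depending only on $\|\mu\|_{L^{\infty}(\mathds{R})}$, $C_{\rm L}$, and $\|B_2\|_{L^{\infty}}$. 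Choosing $\lambda$ large enough that the $\lambda\|\bar V\|_{L^2}^2+\lambda\|\bar W\|_{L^2}^2$ contribution dominates the source bound, the integrated identity yields
\[
\tfrac12\|\bar V(T)\|_{L^2(\Omega)}^2 + \tfrac12\|\bar W(T)\|_{L^2(\Omega)}^2 \leq 0,
\]
and the same inequality holds with $T$ replaced by any $t\in(0,T]$. Hence $\bar V\equiv 0$ and $\bar W\equiv 0$, so $V\equiv W\equiv 0$ and the solution is unique.

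The only genuine obstacle is the Lipschitz source term $(\mu(S_1)-\mu(S_2))B_2$, which is the reason both the Lipschitz assumption on $\mu$ and the essential boundedness of $B$ from Theorem \ref{bpositiva}(ii) enter the hypotheses; everything else is the same coercivity-plus-Gronwall argument already deployed for the linear problem, so no new functional-analytic ingredient is required.
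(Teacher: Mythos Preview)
Your proposal is correct and follows essentially the same route as the paper: the same splitting $\mu(S_1)B_1-\mu(S_2)B_2=\mu(S_1)W+(\mu(S_1)-\mu(S_2))B_2$, the same use of Theorem \ref{bpositiva}(ii) to bound $\|B_2\|_{L^\infty}$, the same exponential weight $e^{-\lambda t}$, the same Young-inequality absorption of the advection terms, and the same choice of $\lambda$ to make the combined estimate nonpositive. The only cosmetic difference is that the paper concludes via $\|\bar V\|_{L^2(0,T,H^1)}+\|\bar W\|_{L^2(0,T,H^1)}=0$ rather than $\|\bar V(t)\|_{L^2}^2+\|\bar W(t)\|_{L^2}^2\le 0$, but both follow from the same inequality.
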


\begin{proof}
Let us assume that $(S_{1},B_{1})$ and $(S_{2},B_{2})$ are two different weak solutions of System (\ref{advdifreac}). We denote $V=S_{1}-S_{2}$, $W=B_{1}-B_{2}$ and $\bar{V}=e^{-\lambda t}V$, $\bar{W}=e^{-\lambda t}W$, where $\lambda>0$ will be chosen later. Proceeding as in previous theorems, we can obtain the following energy estimate:
\begin{equation}\label{intento1s}
\begin{array}{l}
\frac{1}{2} \|\bar{V}(T)\|_{L^{2}(\Omega)}^{2} + \lambda \int_{0}^{T} \|\bar{V}(\tau)\|_{L^{2}(\Omega)}^{2}{\rm d} \tau + \int_{0}^{T} \int_{\Gamma_{\rm out}} Q(x,\tau) \bar{V}(x,\tau)^{2}{\rm d} x {\rm d} \tau \\ \\
+ D_{\rm S}\int_{0}^{T} \|\nabla \bar{V}(\tau) \|_{L^{2}(\Omega)}^{2} {\rm d} \tau   = \int_{0}^{T} \int_{\Omega} \mathbf{Q} \bar{V}(x,\tau) \nabla \bar{V}(x,\tau) {\rm d}x {\rm d} \tau
\\
\\
+ \underbrace{\int_{0}^{T} e^{-\lambda \tau}\int_{\Omega} \Big(\mu(S_{\rm 2}(x,\tau))B_{\rm 2}(x,\tau) - \mu(S_{\rm 1}(x,\tau))B_{\rm 1}(x,\tau) \Big) \bar{V}(x,\tau) {\rm d} x {\rm d}\tau}_{(I)}.
\end{array}
\end{equation}
Now,
\begin{equation*}
\begin{array}{l l}
(I) = & \int_{0}^{T}e^{-\lambda \tau} \int_{\Omega} \mu(S_{\rm 1}(x,\tau))\Big(B_{\rm 2}(x,\tau)-B_{\rm 1}(x,\tau)\Big) \bar{V}(x,\tau){\rm d}x {\rm d} \tau\\
\\
 &+ \int_{0}^{T}e^{-\lambda \tau}\int_{\Omega} \Big(\mu(S_{\rm 2}(x,\tau)) - \mu(S_{\rm 1}(x,\tau))\Big )B_{\rm 2}(x,\tau) \bar{V}(x,\tau){\rm d} x {\rm d} \tau.\\
\end{array}
\end{equation*}
Moreover, since $B_{\rm 2} \in L^{\infty}(\Omega \times (0,T))$ (see Theorem \ref{bpositiva}), and using the fact that $\mu$ is Lipschitz, there exists a constant $C_{\rm L}>0$ such that  
\begin{equation*}
\begin{array}{l l}
(I) & \leq \|\mu\|_{L^{\infty}(\mathds{R})} \int_{0}^{T}\int_{\Omega}|\bar{V}(x,\tau)| |\bar{W}(x,\tau)|{\rm d} x{\rm d} \tau\\
\\
 & +  C_{\rm L} \int_{0}^{T} e^{-\lambda t} \int_{\Omega} |S_{\rm 2}(x,\tau) - S_{\rm 1}(x,\tau)| |B_{\rm 2}(x,\tau)| |\bar{V}(x,\tau)| {\rm d} x{\rm d} \tau\\
\\
& \leq \frac{\|\mu\|_{L^{\infty}(\mathds{R})}}{2} \int_{0}^{T}(\|\bar{V}(\tau)\|_{L^{2}(\Omega)}^{2}+ \|\bar{W}(\tau)\|_{L^{2}(\Omega)}^{2}){\rm d} \tau + C_{\rm L} \|B_{\rm 2}\|_{L^{\infty}(\Omega \times (0,T))} \int_{0}^{T} \|\bar{V}(\tau)\|_{L^{2}(\Omega)}^{2} {\rm d} \tau \\
\\
& = \frac{\|\mu\|_{L^{\infty}(\mathds{R})}}{2} \int_{0}^{T}\|\bar{W}(\tau)\|_{L^{2}(\Omega)}^{2}{\rm d} \tau + ( \frac{\|\mu\|_{L^{\infty}(\mathds{R})}}{2} + C_{\rm L} \|B_{\rm 2}\|_{L^{\infty}(\Omega \times (0,T))}) \int_{0}^{T} \|\bar{V}(\tau)\|_{L^{2}(\Omega)}^{2} {\rm d} \tau.\\
\end{array}
\end{equation*}
Coming back to (\ref{intento1s}) and applying Holder's and Young's inequality (see e.g., \cite{evans2010partial}) with $\epsilon_{1}>0$ (that will be chosen later), one has:
\begin{equation}\label{intento2s}
\begin{array}{l}
\frac{1}{2} \|\bar{V}(T)\|_{L^{2}(\Omega)}^{2} + \Big(\lambda - \epsilon_{1} \|Q\|_{L^{\infty}(\bar{\Omega}\times(0,T))} - \frac{\|\mu\|_{L^{\infty}(\mathds{R})}}{2} - C_{\rm L} \|B_{\rm 2}\|_{L^{\infty}(\Omega \times (0,T))} \Big)\int_{0}^{T} \|\bar{V}(\tau)\|_{L^{2}(\Omega)}^{2}{\rm d} \tau  \\
\\
+ \int_{0}^{T} \int_{\Gamma_{\rm out}} Q(x,\tau) \bar{V}(x,\tau)^{2}{\rm d} x {\rm d} \tau + \Big(D_{\rm S} - \frac{\|Q\|_{L^{\infty}(\bar{\Omega}\times(0,T))}}{4\epsilon_{1}} \Big)\int_{0}^{T}\|\nabla \bar{V}(\tau)\|_{L^{2}(\Omega)}^{2}{\rm d} \tau \\
\\
\leq \frac{\|\mu\|_{L^{\infty}(\mathds{R})}}{2} \int_{0}^{T} \|\bar{W}(\tau)\|_{L^{2}(\Omega)}^{2}{\rm d} \tau. 
\end{array}
\end{equation}
Proceeding analogously, we obtain the following energy estimate
\begin{equation}\label{intento1b}
\begin{array}{l}
\frac{1}{2} \|\bar{W}(T)\|_{L^{2}(\Omega)}^{2} + \lambda \int_{0}^{T} \|\bar{W}(\tau)\|_{L^{2}(\Omega)}^{2}{\rm d} \tau + \int_{0}^{T} \int_{\Gamma_{\rm out}} Q(x,\tau) \bar{W}(x,\tau)^{2}{\rm d} x {\rm d} \tau \\ \\
+ D_{\rm B} \int_{0}^{T} \|\nabla \bar{W}(\tau) \|_{L^{2}(\Omega)}^{2} {\rm d} \tau  =\int_{0}^{T} \int_{\Omega} \mathbf{Q} \bar{W}(x,\tau) \nabla \bar{W}(x,\tau) {\rm d}x {\rm d} \tau \\
\\
+ \underbrace{\int_{0}^{T} e^{-\lambda \tau}\int_{\Omega} (\mu(S_{\rm 1}(x,\tau))B_{\rm 1}(x,\tau)-\mu(S_{\rm 2}(x,\tau))B_{\rm 2}(x,\tau)) \bar{W}(x,\tau) {\rm d} x {\rm d}\tau}_{(II)}.
\end{array}
\end{equation}

Now,
\begin{equation*}
(II) =  \int_{0}^{T}\int_{\Omega} \mu(S_{\rm 1}(x,\tau))\bar{W}(x,\tau)^{2}{\rm d}x {\rm d} \tau + \int_{0}^{T}e^{-\lambda \tau}\int_{\Omega} \Big(\mu(S_{\rm 1}(x,\tau)) - \mu(S_{\rm 2}(x,\tau))\Big)B_{\rm 2}(x,\tau) \bar{W}(x,\tau){\rm d} x {\rm d} \tau.
\end{equation*}
Since $\mu$ is Lipschitz and $B_{\rm 2} \in L^{\infty}(\Omega \times (0,T))$ (see Theorem \ref{bpositiva}) one has 
\begin{equation*}
\begin{array}{l}
(II) \leq \|\mu\|_{L^{\infty}(\mathds{R})}\int_{0}^{T}\|\bar{W}(\tau)\|_{L^{2}(\Omega)}^{2}{\rm d} \tau + C_{L} \|B_{\rm 2}\|_{L^{\infty}(\Omega \times (0,T))} \int_{0}^{T}\int_{\Omega} |\bar{W}(x,\tau)\bar{V}(x,\tau)|{\rm d} x {\rm d} \tau.
\end{array}
\end{equation*}
Applying Young's inequality (see e.g., \cite{evans2010partial}) with $\epsilon=\frac{1}{2}$, one obtains 
\begin{equation*}
\begin{array}{l}
(II) \leq \|\mu\|_{L^{\infty}(\mathds{R})}\int_{0}^{T}\|\bar{W}(\tau)\|_{L^{2}(\Omega)}^{2}{\rm d} \tau + C_{\rm L}\|B_{\rm 2}(\tau)\|_{L^{\infty}(\Omega \times (0,T))}  \int_{0}^{T} (\frac{\|\bar{W}(\tau)\|_{L^{2}(\Omega)}^{2}}{2} + \frac{\|\bar{V}(\tau)\|_{L^{2}(\Omega)}^{2}}{2}) {\rm d} \tau.
\end{array}
\end{equation*}

Coming back to equation (\ref{intento1b}), it follows that
\begin{equation}\label{intento2b}
\begin{array}{l}
\frac{1}{2} \|\bar{W}(T)\|_{L^{2}(\Omega)}^{2} + \int_{0}^{T} \int_{\Gamma_{\rm out}} Q(x,\tau) \bar{W}(x,\tau)^{2}{\rm d} x {\rm d} \tau + \Big(D_{\rm B} - \frac{\|Q\|_{L^{\infty}(\bar{\Omega}\times(0,T))}}{4\epsilon_{2}} \Big)\int_{0}^{T}\|\nabla \bar{W}(\tau)\|_{L^{2}(\Omega)}^{2}{\rm d} \tau \\
\\
+\Big(\lambda - \epsilon_{2} \|Q\|_{L^{\infty}(\bar{\Omega}\times(0,T))} - \|\mu\|_{L^{\infty}(\mathds{R})} - \frac{C_{L} \|B_{2}\|_{L^{\infty}(\Omega\times(0,T))}}{2}\Big)\int_{0}^{T} \|\bar{W}(\tau)\|_{L^{2}(\Omega)}^{2}{\rm d} \tau\\
\\
\leq \frac{C_{L} \|B_{\rm 2}\|_{L^{\infty}(\Omega\times(0,T))}}{2} \int_{0}^{T} \|\bar{V}(\tau)\|_{L^{2}(\Omega)}^{2}{\rm d} \tau. 
\end{array}
\end{equation}

Finally, adding equations (\ref{intento2s}) and (\ref{intento2b}), we obtain
\begin{equation}
\begin{array}{l}
\frac{1}{2} (\|\bar{V}(t)\|_{L^{2}(\Omega)}^{2} + \|\bar{W}(t)\|_{L^{2}(\Omega)}^{2})\\
\\ + (\lambda - \epsilon_{1} \|Q\|_{L^{\infty}(\bar{\Omega}\times(0,T))} - \frac{\|\mu\|_{L^{\infty}(\mathds{R})}}{2} - \frac{3 C_{L} \|B_{\rm 2}\|_{L^{\infty}(\Omega\times(0,T))}}{2}) \int_{0}^{T} \|\bar{V}(\tau)\|_{L^{2}(\Omega)}^{2}{\rm d} \tau \\
\\
+ (\lambda - \epsilon_{2} \|Q\|_{L^{\infty}(\bar{\Omega}\times(0,T))} - \frac{3}{2}\|\mu\|_{L^{\infty}(\mathds{R})} - \frac{C_{L} \|B_{\rm 2}\|_{L^{\infty}(\Omega\times(0,T))}}{2})\int_{0}^{T} \|\bar{W}(\tau)\|_{L^{2}(\Omega)}^{2}{\rm d} \tau \\
\\
+(D_{\rm B} - \frac{\|Q\|_{L^{\infty}(\bar{\Omega}\times(0,T))}}{4\epsilon_{2}})\int_{0}^{T}\|\nabla \bar{W}(\tau)\|_{L^{2}(\Omega)}^{2}{\rm d} \tau  + (D_{\rm S} - \frac{\|Q\|_{L^{\infty}(\bar{\Omega}\times(0,T))}}{4\epsilon_{1}}) \int_{0}^{T}\|\nabla \bar{V}(\tau)\|_{L^{2}(\Omega)}^{2}{\rm d} \tau \leq 0. 
\end{array}
\end{equation}

Choosing $\epsilon_{1} > \frac{\|Q\|_{L^{\infty}(\bar{\Omega}\times(0,T))}}{4D_{\rm S}}$, $\epsilon_{2} > \frac{\|Q\|_{L^{\infty}(\bar{\Omega}\times(0,T))}}{4D_{\rm B}}$ and 
$$\lambda> \frac{3C_{\rm L}\|B_{2}\|_{L^{\infty}(\Omega\times (0,T))}}{2} + \max\{\epsilon_{1},\epsilon_{2}\} \|Q\|_{L^{\infty}(\bar{\Omega}\times(0,T))} + \frac{3}{2}\|\mu\|_{L^{\infty}(\mathds{R})},$$ it follows that $\|\bar{W}\|_{L^{2}(0,T,H^{1}(\Omega))}+\|\bar{V}\|_{L^{2}(0,T,H^{1}(\Omega))}=0$, which implies that $\bar{W}=\bar{V}=0$ in $\Omega \times (0,T)$. Consequently $S_{\rm 1}=S_{\rm 2}$ and $B_{\rm 1}=B_{\rm 2}$ in $\Omega \times (0,T)$ and we have proved the statement of Theorem \ref{unicidadnolineal2}.
\end{proof}
\section{Conclusion}
In this work, we have focused on the modeling of a continuous flow bioreactor in which a biomass and a substrate are interacting. We have carried out a mathematical analysis of the system of partial differential equations appearing in the model. We have stated the definition of solution and we have proved theoretical results showing the existence and uniqueness of solution under the assumptions of both linear and nonlinear reaction terms. We have also shown non-negativity and boundedness results for the solution. The results shown in this work are of interest for the study of this type of bioreactor models, their design and the optimization of the corresponding processes (see, e.g., \cite{bellorivas,AIC}).
\section*{Acknowledgements}
This work was carried out thanks to the financial support of  the Spanish ``Ministry of Economy and Competitiveness'' under project MTM2011-22658; the research group MOMAT (Ref. 910480) supported by ``Banco Santander'' and ``Universidad Complutense de Madrid''; and the ``Junta de Andaluc\'ia'' and the European Regional Development Fund through project P12-TIC301.
\section*{Appendix}
We consider $V$, $H$ Hilbert spaces such that $V \subset H$ and $V$ is dense on $H$. If we identify $V'$ with the dual of $V$, it follows that
$$V \subset H \subset V'.$$
Furthermore, if $\mathcal{V}=L^{2}(0,T,V)$ and $\mathcal{H}=L^{2}(0,T,H)$, one has that $\mathcal{V}'=L^{2}(0,T,V')$. 
\\
\\
We also consider the space
$$W(0,T,V,V')= \{ u | u \in L^{2}(0,T,V), \frac{{\rm d} u}{{\rm d} t} \in L^{2}(0,T,V')\},$$
with the norm
$$\|u\|_{W(0,T,V,V')} = ( \int_{0}^{T} \|u(t)\|_{V}^{2} {\rm d} t + \int_{0}^{T} \| \frac{{\rm d} u}{{\rm d} t} (t) \|_{V'}^{2} {\rm d} t)^{\frac{1}{2}}.$$

\begin{thm}[Theorem 3.1 and Proposition 2.1 in \cite{lions}]\label{acotadoporcontinuas}
$$W(0,T,V,V') \subset C^{0}([0,T],H).$$
\end{thm}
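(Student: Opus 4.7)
The plan is to establish the integration-by-parts identity
\begin{equation*}
\|u(t)\|_H^2 - \|u(s)\|_H^2 = 2\int_s^t \langle u'(\tau), u(\tau) \rangle_{V'\times V}\, {\rm d}\tau
\end{equation*}
for every $u \in W(0,T,V,V')$ and all $s,t \in [0,T]$, and to then deduce from it that $u$ admits a representative in $C^0([0,T];H)$.

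First, I would show that the smooth functions (say $C^1([0,T]; V)$) are dense in $W(0,T,V,V')$. The standard route is to extend $u$ across the endpoints of $[0,T]$ by reflection to obtain an element of $W(\mathds{R}; V, V')$ with compact support, and then mollify in time. Since convolution in time commutes with the distributional derivative and is continuous on $L^2$-spaces with values in a Hilbert space, one recovers smooth approximations $u_n$ with $u_n \to u$ in the $W$-norm.

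Second, for smooth $u$ the chain rule gives $\frac{{\rm d}}{{\rm d} t}\|u(t)\|_H^2 = 2\langle u'(t), u(t)\rangle_{V'\times V}$, where one uses the inclusion chain $V \subset H \subset V'$ to identify the duality pairing with the $H$-inner product when both entries lie in $V$. Integrating yields the identity in the smooth case. Passing to the limit $u_n \to u$ in $W$, the right-hand side converges by the continuity of the pairing $L^2(0,T;V') \times L^2(0,T;V) \to \mathds{R}$, while, along a subsequence with $u_n(t) \to u(t)$ in $H$ for a.e.\ $t$, the left-hand side converges too. Hence the identity holds for a.e.\ $s,t$. From here, $t \mapsto \|u(t)\|_H^2$ is absolutely continuous after redefining $u$ on a null set. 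Moreover, $u$ is weakly continuous into $H$: for any $v \in V$, $t \mapsto \langle u(t), v \rangle_H$ has distributional derivative $\langle u'(t), v \rangle_{V'\times V} \in L^2(0,T)$ and hence lies in $H^1(0,T) \subset C^0([0,T])$; density of $V$ in $H$ extends this to every $v \in H$. Norm continuity combined with weak continuity in the Hilbert space $H$ forces strong continuity (Radon--Riesz), giving $u \in C^0([0,T];H)$.

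The main obstacle is the density step, since the extension and mollification must be carried out in the abstract vector-valued setting so that both $u$ and its weak derivative $u'$ are simultaneously approximated in the correct topologies, and one must also verify that the reflection extension does not introduce spurious boundary terms incompatible with the definition of the weak derivative on all of $\mathds{R}$. Once this is in place, the energy identity and its consequences follow by standard Hilbert-space arguments.
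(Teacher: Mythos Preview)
Your proposal is correct and follows the classical route to this embedding theorem: density of smooth functions in $W(0,T,V,V')$ via extension and mollification, the energy identity $\frac{{\rm d}}{{\rm d} t}\|u(t)\|_H^2 = 2\langle u'(t),u(t)\rangle_{V'\times V}$ in the smooth case, and passage to the limit combined with the Radon--Riesz argument (weak continuity plus norm continuity implies strong continuity in a Hilbert space). This is essentially the proof given in Lions--Magenes.

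Note, however, that the paper does \emph{not} supply its own proof of this statement: it is listed in the Appendix purely as a quoted result, with the attribution ``Theorem~3.1 and Proposition~2.1 in \cite{lions}'' and no argument following. So there is no in-paper proof to compare against; your write-up reconstructs the standard Lions--Magenes argument that the paper is implicitly invoking. One minor remark: in the passage-to-the-limit step, the cleaner way to obtain continuity for \emph{all} $t$ (rather than a.e.\ $t$ first and then repairing) is to apply the energy identity to the differences $u_n-u_m$ and conclude that $(u_n)$ is Cauchy in $C^0([0,T];H)$ directly, which avoids the detour through weak continuity and Radon--Riesz. Both routes are valid.
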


Let $a(t,\cdot,\cdot)$ a bilinear and continuous form on $V$, a.e.$t\in (0,T)$, satisfying the following conditions: 
\begin{equation}\label{hipotesisauv1}
\left \{\begin{array}{l}
\forall \text{ }u,v \in V \hspace{1cm}\text{the function } t\rightarrow a(t,u,v) \text{ is measurable and }
\\
\\
\exists \text{ }c\in \mathds{R}:\text{ } |a(t,u,v)|\leq c\|u\| \|v\| \hspace{1cm} \forall u,v\in V, \text{ a.e}.t \in [0,T].
\end{array} \right.
\end{equation}

There exists $\lambda, \alpha>0$ such that 
\begin{equation}\label{hipotesisauv2}
a(t,v,v) + \lambda|v|^{2} \geq \alpha \|v\|^{2} \hspace{1cm} \forall v\in V,\text{ }a.e.t\in [0,T].
\end{equation}

Since a.e. $t\in (0,T)$, the form $v \rightarrow a(t,u,v)$ is continuous on $V$, there exists $A(t)u \in V'$ such that
$$a(t,u,v)=<A(t)u,v>_{V' \times V},$$
which defines
$$A(t) \in \mathcal{L}(V,V').$$

Let us consider the following evolution problem
\begin{equation}\label{problemalions}
\left \{ \begin{array}{l}
\text{Find } u \in W(0,T,V,V') \text{ such that } \\
\\
A(t)u + \frac{{\rm d} u}{{\rm d} t} =f, \hspace{1cm} \text{ where } f \in L^{2}(0,T,V'),\\
\\
y(0)=u_{0}, \hspace{1cm} \text{ where } u_{0} \in H.
\end{array} \right. \end{equation}
\begin{thm}[Theorem 1.2, Chapter III~\cite{lionscontrol}]\label{teoremalions}
Assume Hypothesis (\ref{hipotesisauv1}) and (\ref{hipotesisauv2}) hold. Then Problem (\ref{problemalions}) has a unique weak solution.
\end{thm}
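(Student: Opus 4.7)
\textbf{Proof proposal for Theorem \ref{teoremalions}.} The plan is to use the classical Faedo--Galerkin method combined with energy estimates and a standard reduction that turns the Gårding-type inequality \eqref{hipotesisauv2} into genuine $V$-coercivity. First I would make the change of unknown $\tilde u(t)=e^{-\lambda t}u(t)$, which transforms the evolution equation in \eqref{problemalions} into
\begin{equation*}
\tilde u_t + (A(t)+\lambda I)\tilde u = e^{-\lambda t}f,\qquad \tilde u(0)=u_0,
\end{equation*}
and, thanks to \eqref{hipotesisauv2}, the new bilinear form $\tilde a(t,u,v):=a(t,u,v)+\lambda(u,v)_H$ is continuous and $V$-coercive: $\tilde a(t,v,v)\ge\alpha\|v\|_V^{2}$ a.e.\ $t$. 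Note $\tilde f:=e^{-\lambda t}f\in L^2(0,T,V')$ and $\tilde u\in W(0,T,V,V')$ iff $u\in W(0,T,V,V')$, so without loss of generality I may assume $\tilde a=a$ is coercive.

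Next I would build Galerkin approximants. Using density of $V$ in $H$, pick a countable linearly independent family $\{w_j\}_{j\ge 1}\subset V$ whose linear span is dense in $V$, and let $V_m:=\mathrm{span}\{w_1,\dots,w_m\}$. Seek $u_m(t)=\sum_{j=1}^{m} g_{jm}(t) w_j$ satisfying, for $k=1,\dots,m$,
\begin{equation*}
\bigl(u_m'(t),w_k\bigr)_H + a(t,u_m(t),w_k) = \langle f(t),w_k\rangle_{V'\times V},
\qquad u_m(0)=u_{0}^{m},
\end{equation*}
where $u_{0}^{m}\in V_m$ is chosen with $u_{0}^{m}\to u_{0}$ in $H$. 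The Gram matrix of $\{w_k\}$ in $H$ is invertible, so this is a linear first-order ODE system with $L^\infty$-in-$t$ coefficients and $L^2$-in-$t$ right-hand side; Carathéodory's theorem yields a unique absolutely continuous solution on $[0,T]$.

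The heart of the argument is the a priori estimate. Taking the combination $\sum_k g_{km}(t)\cdot(\,\cdot\,)$ of the Galerkin identities, i.e.\ testing with $u_m(t)$ itself, gives
\begin{equation*}
\tfrac{1}{2}\tfrac{d}{dt}\|u_m(t)\|_H^{2} + a(t,u_m(t),u_m(t)) = \langle f(t),u_m(t)\rangle.
\end{equation*}
Coercivity and Young's inequality yield $\tfrac{d}{dt}\|u_m\|_H^{2}+\alpha\|u_m\|_V^{2}\le C\|f\|_{V'}^{2}$, and integrating in time together with $\|u_0^m\|_H\le\|u_0\|_H+1$ produces a bound
\begin{equation*}
\|u_m\|_{L^\infty(0,T,H)} + \|u_m\|_{L^2(0,T,V)} \le C\bigl(\|u_0\|_H,\|f\|_{L^2(0,T,V')}\bigr),
\end{equation*}
uniform in $m$. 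A bound on $u_m'$ in $L^2(0,T,V')$ follows from the equation itself: for any $v\in V$ with $\|v\|_V\le 1$, writing $v=v_m+v_m^\perp$ with $v_m$ its projection onto $V_m$ and using the Galerkin equation tested against $v_m$, I obtain $|\langle u_m'(t),v_m\rangle|\le (c\|u_m(t)\|_V+\|f(t)\|_{V'})$; since $u_m'(t)\in V_m$ this controls $\|u_m'(t)\|_{V'}$.

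Now I would extract a subsequence, still denoted $u_m$, with $u_m\rightharpoonup u$ in $L^2(0,T,V)$ and $u_m'\rightharpoonup u'$ in $L^2(0,T,V')$; hence $u\in W(0,T,V,V')$. Passing to the limit in the Galerkin identity, fixed $k$ and then using density of $\bigcup_m V_m$ in $V$, yields the variational equation for $u$ tested against every $v\in V$. To identify the initial datum I use Theorem \ref{acotadoporcontinuas} ($W(0,T,V,V')\hookrightarrow C([0,T],H)$) together with integration by parts in time against a test function $\varphi\in C^1([0,T];V)$ with $\varphi(T)=0$: the standard argument forces $u(0)=u_0$.

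Uniqueness (which is the cleanest part) follows from linearity: if $u_1,u_2$ are two solutions, $w:=u_1-u_2\in W(0,T,V,V')$ solves $w'+A(t)w=0$, $w(0)=0$. Testing with $w$ and using \eqref{hipotesisauv2} gives
\begin{equation*}
\tfrac{1}{2}\tfrac{d}{dt}\|w\|_H^{2} + \alpha\|w\|_V^{2} \le \lambda\|w\|_H^{2},
\end{equation*}
and Grönwall's lemma together with $w(0)=0$ (justified again by Theorem \ref{acotadoporcontinuas}) yields $w\equiv 0$. The main obstacle I anticipate is the rigorous passage to the limit in the nonautonomous bilinear form $a(t,u_m,w_k)\to a(t,u,w_k)$ in $\mathcal{D}'(0,T)$; this is handled by noting that for fixed $w_k$ the map $v\mapsto a(\cdot,v,w_k)$ is linear and continuous from $L^2(0,T,V)$ into $L^2(0,T)$ under hypothesis \eqref{hipotesisauv1}, so weak convergence of $u_m$ in $L^2(0,T,V)$ suffices.
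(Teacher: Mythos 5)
The paper does not prove this statement at all: it is quoted verbatim as Theorem~1.2 of Chapter~III of Lions' book \cite{lionscontrol} and used as a black box in the proof of Theorem~\ref{unicidadlineal}, so there is no internal proof to compare yours against. What you have written is the standard Faedo--Galerkin proof of that classical result (exponential shift to upgrade the G\aa rding inequality \eqref{hipotesisauv2} to coercivity, Galerkin system solved by Carath\'eodory, energy estimate by testing with $u_m$, weak compactness, identification of the limit and of the initial datum, and uniqueness via Gr\"onwall), and the overall architecture is correct and is essentially the argument found in Lions and in Evans.

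One step deserves more care. To bound $\|u_m'\|_{L^2(0,T,V')}$ you test the Galerkin equation against the projection $v_m$ of an arbitrary $v\in V$ onto $V_m$ and claim $|\langle u_m'(t),v\rangle|\le c\|u_m(t)\|_V+\|f(t)\|_{V'}$. For this you need two things you did not state: that the projection is the $H$-orthogonal one (so that $(u_m'(t),v)_H=(u_m'(t),v_m)_H$, using $u_m'(t)\in V_m$), and that this projection is bounded from $V$ to $V$ \emph{uniformly in} $m$, i.e.\ $\|v_m\|_V\le C\|v\|_V$. The latter is false for an arbitrary linearly independent dense family; it holds for special bases (e.g.\ eigenfunctions of an associated symmetric coercive operator, as in Evans' treatment). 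Either impose such a basis, or avoid the uniform bound on $u_m'$ altogether: pass to the limit in $\int_0^T\bigl[-(u_m,w_k)_H\varphi'(t)+a(t,u_m,w_k)\varphi(t)\bigr]\,{\rm d}t=\int_0^T\langle f,w_k\rangle\varphi(t)\,{\rm d}t$ using only the weak convergence of $u_m$ in $L^2(0,T,V)$, and then read off $u'=f-A(\cdot)u\in L^2(0,T,V')$ from the limit equation. With that repair the proof is complete; the uniqueness argument, which relies on Theorem~\ref{acotadoporcontinuas} and the identity $\frac{{\rm d}}{{\rm d}t}\|w\|_H^2=2\langle w',w\rangle$ for $w\in W(0,T,V,V')$, is fine as written.
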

\begin{lem}[Aubin-Lions Compactness Lemma]\label{aubin}
Let $X \subset B \subset Y$ Banach spaces such that the inclusion $X \subset B$ is a compact embedding. Then, for any $1 < p < \infty$, $1 \leq q \leq \infty$, the space
$$\{ f : f \in L^{p}(0,T,X) \text{ and } \frac{{\rm d} f}{{\rm d} t} \in L^{q}(0,T,Y) \}$$
is compact embedded in $L^{p}(0,T,B)$.\\
\\
Particularly, if $q=p=2$, $X=H^{1}(\Omega)$ and $B=L^{2}(\Omega)$ and $Y=(H^{1}(\Omega))'$ it follows that
$$W(0,T,H^{1}(\Omega),(H^{1}(\Omega))') \subset L^{2}(0,T,L^{2}(\Omega),$$
with compact embedding. 
\end{lem}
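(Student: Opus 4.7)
The plan is to establish the general statement via the classical Aubin--Lions--Simon route: combine Ehrling's interpolation lemma with the Fr\'echet--Kolmogorov compactness criterion in $L^p(0,T,Y)$. Let $W:=\{f\in L^p(0,T,X):\partial_t f\in L^q(0,T,Y)\}$ with its natural graph norm, and fix a bounded sequence $(u_n)\subset W$, say $\|u_n\|_W\leq M$. The task is to extract a subsequence that converges strongly in $L^p(0,T,B)$.

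First I would prove the Ehrling inequality: for every $\eta>0$ there exists $C_\eta>0$ such that
$$\|v\|_B\leq \eta\,\|v\|_X+C_\eta\|v\|_Y\qquad\text{for all }v\in X.$$
The argument is by contradiction, producing a sequence $(v_k)\subset X$ with $\|v_k\|_B=1$, $\|v_k\|_X$ bounded, and $\|v_k\|_Y\to 0$; the compact embedding $X\hookrightarrow B$ gives a $B$-convergent subsequence with a non-zero limit $v$, while the continuous inclusion $B\hookrightarrow Y$ forces $v=0$, a contradiction. Applying this inequality to $u_n(t)-u_m(t)$ and integrating in $t$ yields
$$\|u_n-u_m\|_{L^p(0,T,B)}\leq 2M\eta + C_\eta\,\|u_n-u_m\|_{L^p(0,T,Y)},$$
so it is enough to extract a subsequence that is Cauchy in $L^p(0,T,Y)$, and then send $\eta\to 0$.

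Next I would verify the two hypotheses of the Fr\'echet--Kolmogorov criterion for relative compactness in $L^p(0,T,Y)$. Pointwise relative compactness in $Y$ of $\{u_n(t)\}$ follows (after extracting on a countable dense set of $t$) from the bound in $X$ combined with the compactness of $X\hookrightarrow B\hookrightarrow Y$. For time-equicontinuity in $L^p(0,T,Y)$, write $u_n(t+h)-u_n(t)=\int_t^{t+h}\partial_s u_n(s)\,ds$ and apply H\"older to get $\|u_n(t+h)-u_n(t)\|_Y\leq h^{1-1/q}\|\partial_s u_n\|_{L^q(t,t+h,Y)}$ when $q>1$; integrating the $p$-th power in $t$ gives
$$\|u_n(\cdot+h)-u_n(\cdot)\|_{L^p(0,T-h,Y)}\longrightarrow 0\quad\text{uniformly in }n\text{ as }h\to 0^+.$$
The endpoint $q=\infty$ is immediate (Lipschitz in $Y$), and the $q=1$ case, which I expect to be the main obstacle, is handled by using absolute continuity of the integral to get pointwise-in-$t$ equicontinuity and then upgrading to $L^p(0,T,Y)$-equicontinuity via dominated convergence together with the uniform $L^1$ bound on $\partial_s u_n$.

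The particular case stated in the lemma then follows by specializing to $p=q=2$, $X=H^1(\Omega)$, $B=L^2(\Omega)$, $Y=(H^1(\Omega))'$: the Rellich--Kondrachov theorem provides the compact embedding $H^1(\Omega)\hookrightarrow L^2(\Omega)$ on the bounded Lipschitz domain $\Omega$, and the general statement yields the compact embedding $W(0,T,H^1(\Omega),(H^1(\Omega))')\hookrightarrow L^2(0,T,L^2(\Omega))$ used throughout the existence proof.
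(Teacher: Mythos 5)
The paper itself offers no proof of this lemma: it is stated in the appendix as a classical auxiliary result (the Aubin--Lions lemma in Simon's generality, allowing $1\le q\le\infty$) and is used as a black box, so there is no in-paper argument to compare yours against. Your outline follows the standard Aubin--Lions--Simon route --- Ehrling's interpolation inequality to reduce compactness in $L^p(0,T,B)$ to compactness in $L^p(0,T,Y)$, then a Fr\'echet--Kolmogorov/translation criterion --- and that overall strategy is the right one; the Ehrling step and the reduction $\|u_n-u_m\|_{L^p(0,T,B)}\le 2M\eta+C_\eta\|u_n-u_m\|_{L^p(0,T,Y)}$ are correct as written.

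Two of your intermediate steps would, however, fail as literally stated. First, you claim that pointwise relative compactness of $\{u_n(t)\}$ in $Y$ ``follows from the bound in $X$''; but a bound on $\|u_n\|_{L^p(0,T,X)}$ gives no control of $\|u_n(t)\|_X$ at a fixed $t$ (the functions may concentrate), so for fixed $t$ the set $\{u_n(t)\}_n$ need not be bounded in $X$ and the compact embedding cannot be invoked there. The standard repair (Simon's version of the criterion) replaces pointwise values by time averages: $\bigl\|\int_{t_1}^{t_2}u_n\,dt\bigr\|_X\le (t_2-t_1)^{1-1/p}\|u_n\|_{L^p(0,T,X)}$ by H\"older, so the set of averages is bounded in $X$ and hence relatively compact in $B\subset Y$. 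Second, for $q=1$ you propose to obtain uniform-in-$n$ smallness of the translates from ``absolute continuity of the integral''; absolute continuity is uniform only over uniformly integrable families, and a sequence bounded in $L^1(0,T,Y)$ need not be uniformly integrable (the derivatives may concentrate). The correct argument is a Fubini exchange: $\int_0^{T-h}\|u_n(t+h)-u_n(t)\|_Y\,dt\le\int_0^{T-h}\int_t^{t+h}\|\partial_s u_n(s)\|_Y\,ds\,dt\le h\,\|\partial_t u_n\|_{L^1(0,T,Y)}$, which together with the uniform bound $\|u_n(t+h)-u_n(t)\|_Y\le M$ and the interpolation $\|g\|_{L^p}\le\|g\|_{L^\infty}^{1-1/p}\|g\|_{L^1}^{1/p}$ yields $\|u_n(\cdot+h)-u_n(\cdot)\|_{L^p(0,T-h,Y)}\le M\,h^{1/p}\to 0$ uniformly in $n$. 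With these two repairs your outline becomes the standard complete proof, and the specialization to $p=q=2$, $X=H^1(\Omega)$, $B=L^2(\Omega)$, $Y=(H^1(\Omega))'$ via Rellich--Kondrachov on the bounded Lipschitz domain is correct as you state it.
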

\begin{thm}[Theorem 3.18, \cite{brezis2010}]\label{acotadaconvergente}
Let $X$ a separable space and $\{ f_{\rm n} \}_{n} \subset X' $ a bounded sequence, then there exists a subsequence $\{ f_{\rm n_{\rm k}} \}_{k}$ that converges in the weak-$\ast$ topology to some $f \in X'$. 
\end{thm}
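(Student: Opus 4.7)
The plan is to prove this by the classical Cantor diagonal argument combined with an equicontinuity/density extension, which is the standard proof of the sequential Banach--Alaoglu theorem in the separable case. First, I would use the separability hypothesis to fix a countable dense set $\{x_k\}_{k\in\mathds{N}}\subset X$. The boundedness assumption gives a constant $M>0$ with $\|f_n\|_{X'}\le M$ for all $n$; in particular, for each fixed $k$, the scalar sequence $\{f_n(x_k)\}_n$ lies in the compact interval $[-M\|x_k\|,\,M\|x_k\|]$.

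Next I would run the diagonal extraction. By Bolzano--Weierstrass, extract a subsequence $\{f_n^{(1)}\}$ of $\{f_n\}$ so that $f_n^{(1)}(x_1)$ converges. From $\{f_n^{(1)}\}$ extract a further subsequence $\{f_n^{(2)}\}$ so that $f_n^{(2)}(x_2)$ also converges, and so on. The diagonal subsequence $g_k:=f_k^{(k)}$ then has the property that $g_k(x_j)$ converges in $\mathds{R}$ for every $j\in\mathds{N}$.

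The third step is to upgrade pointwise convergence on the dense set to pointwise convergence on all of $X$. For arbitrary $x\in X$ and $\varepsilon>0$, pick $x_j$ with $\|x-x_j\|<\varepsilon$. A standard $3\varepsilon$-argument using $|g_k(x)-g_m(x)|\le |g_k(x)-g_k(x_j)|+|g_k(x_j)-g_m(x_j)|+|g_m(x_j)-g_m(x)|$ and the uniform bound $\|g_k\|_{X'}\le M$ shows that $\{g_k(x)\}_k$ is Cauchy, hence convergent. Define
\[
f(x):=\lim_{k\to\infty} g_k(x)\qquad\text{for every } x\in X.
\]

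Finally I would verify that $f\in X'$ and that the convergence is weak-$\ast$. Linearity of $f$ follows from linearity of each $g_k$ by passing to the limit, and continuity follows from $|f(x)|\le \limsup_k \|g_k\|_{X'}\|x\|\le M\|x\|$, so $f\in X'$ with $\|f\|_{X'}\le M$. By the very definition of $f$, we have $g_k(x)\to f(x)$ for all $x\in X$, which is exactly weak-$\ast$ convergence $g_k \stackrel{\ast}{\rightharpoonup} f$ in $X'$. The only delicate point is the density/diagonal step, which is where separability of $X$ is essential; without it the diagonal construction cannot be carried out on a countable set that is dense enough to determine the limit functional.
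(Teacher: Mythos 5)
The paper does not prove this statement; it is quoted verbatim from Brezis as an auxiliary result in the appendix, so there is no internal proof to compare against. Your argument is the standard and correct proof of the sequential Banach--Alaoglu theorem in the separable case (dense countable set, Cantor diagonal extraction, $3\varepsilon$-upgrade to all of $X$, and verification that the pointwise limit is a bounded linear functional), and it is essentially the proof given in the cited reference.
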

\begin{thm}[Theorem 4.9, \cite{brezis2010}]\label{fuertepuntual}
Let $1\leq p \leq \infty$. If $\{ f_{\rm n} \}_{n} \subset L^{p}(Q)$ and $f \in L^{p}(Q)$, such that $\|f_{\rm n} - f\|_{L^{p}(Q)} \rightarrow 0$, then there exists a subsequence $\{ f_{\rm n_{\rm k}} \}_{k}$ such that $f_{\rm n_{\rm k}} \rightarrow f$ almost everywhere in $Q$.
\end{thm}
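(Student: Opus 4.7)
The plan is to reduce the claim to the classical fast-subsequence trick, handling $p=\infty$ trivially and $1\le p<\infty$ by a summability argument. The key idea is that once a subsequence converges \emph{fast enough} in $L^p$-norm, the pointwise series of errors is finite almost everywhere, which forces pointwise convergence.

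For the easy case $p=\infty$, the hypothesis $\|f_n-f\|_{L^\infty(Q)}\to 0$ means that for each $n$ there is a null set $N_n\subset Q$ with $|f_n(x)-f(x)|\le \|f_n-f\|_{L^\infty(Q)}$ for all $x\notin N_n$. Setting $N=\bigcup_n N_n$, which is still null, one gets $f_n(x)\to f(x)$ for every $x\in Q\setminus N$, so in fact the whole sequence (not just a subsequence) converges a.e. No extraction is needed in this case.

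For the substantive case $1\le p<\infty$, my first step is to extract a subsequence $\{f_{n_k}\}_k$ such that $\|f_{n_k}-f\|_{L^p(Q)}\le 2^{-k}$ for every $k\ge 1$; this is possible because $\|f_n-f\|_{L^p(Q)}\to 0$, so one picks $n_k$ inductively as the smallest index larger than $n_{k-1}$ achieving this bound. Next I introduce the partial sums $g_K(x):=\sum_{k=1}^{K}|f_{n_k}(x)-f(x)|$ and observe, by Minkowski's inequality, that
\begin{equation*}
\|g_K\|_{L^p(Q)}\le \sum_{k=1}^{K}\|f_{n_k}-f\|_{L^p(Q)}\le \sum_{k=1}^{K}2^{-k}\le 1.
\end{equation*}
Since $g_K$ is nonnegative and increasing in $K$, the monotone convergence theorem applied to $g_K^p$ yields that $g(x):=\sum_{k=1}^{\infty}|f_{n_k}(x)-f(x)|$ satisfies $\|g\|_{L^p(Q)}\le 1$; in particular $g(x)<\infty$ for almost every $x\in Q$. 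The general term of a convergent numerical series tends to $0$, so $|f_{n_k}(x)-f(x)|\to 0$ a.e.\ in $Q$, which is exactly the desired conclusion.

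There is no real obstacle here, since this is a textbook result whose only mild subtlety is realizing that $p=\infty$ must be handled separately (as the Minkowski/monotone convergence step uses $p<\infty$ implicitly through the finiteness of integrals of $g^p$). If one wanted to avoid monotone convergence, a Chebyshev/Borel--Cantelli argument on the sets $E_k=\{|f_{n_k}-f|>2^{-k/2}\}$ would give the same conclusion, but the summability argument above seems cleanest.
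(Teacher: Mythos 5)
Your proof is correct and is essentially the standard fast-subsequence/summability argument; the paper does not prove this statement itself but merely cites it as Theorem 4.9 of Brezis, whose proof proceeds exactly as you do (extract $n_k$ with $\|f_{n_k}-f\|_{L^p}\le 2^{-k}$, bound $g=\sum_k|f_{n_k}-f|$ in $L^p$ via Minkowski and monotone convergence, and conclude a.e.\ convergence from finiteness of $g$). Your separate treatment of $p=\infty$ is also correct and appropriately noted as the case where the summability step does not apply.
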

\begin{thm}[Proposition 3.13(iv), \cite{brezis2010}]\label{convergenciaproducto}
If $\{ f_{\rm n} \}_{n} \subset X'$ converges to $f \in X'$ in the weak-$\ast$ topology, $\{ x_{\rm n} \}_{n} \subset X$, $x\in X$  such that $\|x_{\rm n} - x \|_{X}\rightarrow 0$, then
$$<f_{\rm n},x_{\rm n}>_{X'\times X} \rightarrow <f,x>_{X'\times X}.$$
\end{thm}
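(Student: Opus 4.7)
The plan is to use the standard decomposition
\[
\langle f_n, x_n\rangle_{X'\times X} - \langle f, x\rangle_{X'\times X} = \langle f_n, x_n - x\rangle_{X'\times X} + \langle f_n - f, x\rangle_{X'\times X},
\]
and to show that each of the two terms on the right tends to $0$ as $n\to\infty$. The second term is straightforward: by the very definition of weak-$\ast$ convergence of $f_n$ to $f$ in $X'$, evaluating on the fixed element $x\in X$ gives $\langle f_n - f, x\rangle_{X'\times X}\to 0$.

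For the first term the strategy is to bound it by $\|f_n\|_{X'}\,\|x_n-x\|_X$ and then use two ingredients: uniform boundedness of $\{\|f_n\|_{X'}\}_n$, and the assumed strong convergence $\|x_n-x\|_X\to 0$. To obtain the uniform bound I would invoke the Banach--Steinhaus (uniform boundedness) theorem on the family $\{f_n\}\subset X'$: for every fixed $y\in X$, the scalar sequence $\langle f_n, y\rangle_{X'\times X}$ converges (to $\langle f,y\rangle_{X'\times X}$) and is therefore bounded; since this pointwise boundedness holds on all of the Banach space $X$, Banach--Steinhaus yields $M:=\sup_n\|f_n\|_{X'}<\infty$. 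Combining,
\[
\bigl|\langle f_n, x_n-x\rangle_{X'\times X}\bigr| \le M\,\|x_n-x\|_X \longrightarrow 0.
\]

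Adding the two estimates gives $\langle f_n,x_n\rangle_{X'\times X}\to \langle f,x\rangle_{X'\times X}$, which is the claim. The main (only genuine) obstacle is the uniform bound on $\|f_n\|_{X'}$, because without it the product $\|f_n\|_{X'}\|x_n-x\|_X$ need not vanish even if the second factor does; this is exactly where the Banach--Steinhaus theorem (and hence completeness of $X$, which is already in the hypotheses as ``$X$ a Banach space'') is essential. Everything else reduces to a two-line $\varepsilon$-splitting argument with no delicate analytic point.
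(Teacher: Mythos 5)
Your argument is correct and is the standard proof of this result: the paper itself gives no proof, only the citation to Brezis (Proposition 3.13), and the argument there is exactly your decomposition
$\langle f_n,x_n\rangle-\langle f,x\rangle=\langle f_n,x_n-x\rangle+\langle f_n-f,x\rangle$
combined with Banach--Steinhaus to obtain $\sup_n\|f_n\|_{X'}<\infty$. You correctly identify the one genuinely non-trivial ingredient (the uniform bound, which needs completeness of $X$), so there is nothing to add.
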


\bibliographystyle{spmpsci}      


%
%

\end{document}